\numberwithin{equation}{section}
\theoremstyle{plain}
\newtheorem{theorem}{Theorem}[section]
\newtheorem{corollary}[theorem]{Corollary}
\newtheorem{example}[theorem]{Example}
\newtheorem{lemma}{Lemma}[section]
\theoremstyle{definition}
\newtheorem{definition}[theorem]{Definition}
\theoremstyle{remark}
\newtheorem{remark}{Remark}[section]
\begin{document}

\title[Bohr-Phenomenon]{Bohr Phenomenon for $K$-Quasiconformal harmonic mappings and
	Logarithmic Power Series }
	\thanks{K. Gangania thanks to University Grant Commission, New-Delhi, India for providing Junior Research Fellowship under UGC-Ref. No.:1051/(CSIR-UGC NET JUNE 2017).}

	\author[Kamaljeet]{Kamaljeet Gangania}
	\address{Department of Applied Mathematics, Delhi Technological University,
		Delhi--110042, India}
	\email{gangania.m1991@gmail.com}

\maketitle

\begin{abstract} 
	  In this article, we establish the Bohr inequalities for the sense-preserving $K$-quasiconformal harmonic
	  mappings defined in the unit disk $\mathbb{D}$ involving classes of Ma-Minda starlike and convex univalent functions, usually denoted by $\mathcal{S}^*(\psi)$ and $\mathcal{C}(\psi)$ respectively, and for $\log (f(z)/z)$ where $f$ belongs to the Ma-Minda classes or satisfies certain differential subordination. We also estimate Logarithmic coefficient's bounds for the functions in $\mathcal{C}(\psi)$  for the case $\psi(\mathbb{D})$ be convex.  
	  
\end{abstract}
\vspace{0.5cm}
	\noindent \textit{2010 AMS Subject Classification}. 30B10, 30C45, 30C50, 30C80, 30C62, 31A05\\
	\noindent \textit{Keywords and Phrases}. Bohr radius, Radius problems, Harmonic mappings, Starlike and Convex functions, Logarithmic coefficients.

\maketitle
	
\section{Introduction}
Let $\mathcal{H}$ be the class of complex valued harmonic functions $h$ (which satisfy the Laplacian equation $\Delta{h}=4h_{z\bar{z}}=0$) defined on the unit disk $\mathbb{D}:=\{z\in \mathbb{C}: |z|<1\}$, then we can write $h=f+\bar{g}$, where $f$ and $g$ are analytic and satisfies $h(0)=g(0)$.  We say that $h$ is sense-preserving in $\mathbb{D}$ whenever the Jacobian $J_h:=|f'|^2-|g'|^2>0$. A sense-preserving homeomorphism defined in $\mathbb{D}$ which is also harmonic is called $K$-$quasiconformal$, $K\in[1,\infty)$ if the (second complex) dilatation $w_h:=g'/f'$ satisfies $|w_h(z)|\leq k$, $k=(K-1)/(K+1)\in[0,1)$.\\

In $1914$, H. Bohr~\cite{bohr1914} proved a power series inequality which is also known as the {\it classical Bohr inequality}:
\begin{theorem}[Bohr's Theorem, \cite{bohr1914}]\label{BohrTheorem}
	Let $f(z)=\sum_{m=0}^{\infty}a_mz^m$ be an analytic function in $\mathbb{D}$ and $|g(z)|<1$ for all $z\in \mathbb{D}$, then
	\begin{equation}\label{intro3-classy}
	\sum_{m=0}^{\infty}|a_m||z|^m\leq1, \quad \text{for} \quad |z|\leq\frac{1}{3}.
	\end{equation}
\end{theorem}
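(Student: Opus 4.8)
The plan is to deduce \eqref{intro3-classy} from the classical sharp coefficient bounds for bounded analytic self-maps of $\mathbb{D}$, followed by an elementary one-variable estimate. Throughout I write $r=|z|$ and $a=|a_0|=|f(0)|<1$, and I read the hypothesis as saying that $f$ maps $\mathbb{D}$ into $\mathbb{D}$ (the minor point that the stated bound on $f$ is strict is dealt with at the end). The first step is to establish
\[
|a_m|\le 1-|a_0|^2,\qquad m\ge 1 .
\]
To prove this I would fix $m\ge 1$, set $\omega=e^{2\pi i/m}$, and form the average $P(z)=\tfrac1m\sum_{j=0}^{m-1}f(\omega^{j}z)$. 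Since $\tfrac1m\sum_{j=0}^{m-1}\omega^{jk}$ equals $1$ when $m\mid k$ and $0$ otherwise, a direct computation gives $P(z)=a_0+a_m z^m+a_{2m}z^{2m}+\cdots$, so there is an analytic $G_m$ on $\mathbb{D}$ with $P(z)=G_m(z^m)$, $G_m(0)=a_0$ and $G_m'(0)=a_m$; moreover $|G_m|\le 1$ on $\mathbb{D}$ because $P(z)$ is a convex combination of the values $f(\omega^{j}z)$, each of modulus $<1$. Applying the Schwarz--Pick lemma to $G_m$ at the origin (equivalently, composing $G_m$ with the disk automorphism carrying $a_0$ to $0$ and using the ordinary Schwarz lemma) then yields $|a_m|=|G_m'(0)|\le 1-|a_0|^2$.

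For the second step I would substitute this bound into the series and sum a geometric progression:
\[
\sum_{m=0}^{\infty}|a_m|\,r^m\le a+(1-a^2)\sum_{m=1}^{\infty}r^m=a+(1-a^2)\,\frac{r}{1-r}.
\]
Writing $1-a^2=(1-a)(1+a)$ and using $a<1$, the right-hand side is $\le 1$ if and only if $(1+a)\,\tfrac{r}{1-r}\le 1$, that is, iff $r\le \tfrac{1}{2+a}$. Since $\tfrac{1}{2+a}\ge\tfrac13$ for every $a\in[0,1)$, the bound holds uniformly in $a$ whenever $r\le\tfrac13$, which is precisely \eqref{intro3-classy}. To see that the constant $\tfrac13$ cannot be enlarged I would test the automorphisms $\varphi_a(z)=\tfrac{a-z}{1-az}$, $a\in(0,1)$, for which $|a_m|=(1-a^2)a^{m-1}$ and $\sum_{m\ge0}|a_m|r^m=a+(1-a^2)\tfrac{r}{1-ar}$; this exceeds $1$ as soon as $r>\tfrac{1}{1+2a}$, and $\tfrac{1}{1+2a}\to\tfrac13$ as $a\to1^-$.

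The one genuinely delicate point is the coefficient inequality $|a_m|\le 1-|a_0|^2$: the naive Parseval estimate $\sum_{m\ge0}|a_m|^2\le 1$ gives only $|a_m|\le\sqrt{1-|a_0|^2}$, which is in the wrong direction for the geometric-series bound to close. The device that makes it work is the root-of-unity averaging, which converts the $m$-th Taylor coefficient of $f$ into a \emph{first} derivative of a bounded function and thereby puts Schwarz--Pick in play; after that everything is routine algebra. As for the strictness of the hypothesis, one may run the whole argument for the dilated functions $f(\rho z)$ with $\rho<1$ and let $\rho\to1^-$; alternatively it can be ignored outright, since Schwarz--Pick is equally valid for analytic maps $\mathbb{D}\to\overline{\mathbb{D}}$.
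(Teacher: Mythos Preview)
Your proof is correct: the root-of-unity averaging produces a bounded analytic function whose first derivative at the origin is $a_m$, Schwarz--Pick then gives $|a_m|\le 1-|a_0|^2$, and the geometric-series computation together with the M\"obius extremals $\varphi_a$ finishes both the inequality and its sharpness. The handling of the strict bound via dilation is also fine.

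There is, however, nothing in the paper to compare against. Theorem~\ref{BohrTheorem} is quoted in the introduction as a classical result (attributed to Bohr~\cite{bohr1914}) with no proof supplied; the paper uses it only as historical motivation for the Bohr-type inequalities studied later. The argument you wrote is essentially the one due independently to Wiener, M.~Riesz, and Schur, who sharpened Bohr's original radius $1/6$ to the optimal $1/3$; this is the standard modern proof and would be entirely appropriate as a self-contained justification of the statement.
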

Since then several inequalities of similar kind are being studied in various context, which are usually named as {\it Bohr inequalities or Bohr type inequalities}. The readers are referred to see \cite{ali2017,jain2019,boas1997,bohr1914,ponnusamy17,ponnusamy18,LiuPonu-2020,muhanna10,muhanna14,muhana2016,kamal-mediter2021} and the references therein. 

For instance if in the classical Bohr inequality in Theorem~\ref{BohrTheorem}, we try to replace the initial coefficients $a_m, (m=0,1)$ by $|f(z)|$ and $|f'(z)|$, and further $z$ by some suitable choice of  functions $\omega(z)$ such that $|\omega(z)|<1$. Or replace the Taylor coefficients $a_m$ completely by the higher order derivatives of $f$. Then the combinations obtained lead us to so called Bohr-type inequalities. We now mention a few such combinations: Suppose that $f(z)=\sum_{m=0}^{\infty}a_m z^m$ be analytic in $\mathbb{D}$ and $a=|a_0|$ and ${||f_0||}_{r}^{2}=  \sum_{m=1}^{\infty}|a_{2m}|r^{2m}$, where $f_0(z)=f(z)-a_0$.
\begin{enumerate}[$(i)$]
	\item $|f(z)|^n+ \sum_{m=1}^{\infty}|a_m|r^m $, $n=0$ or $1$
	\item $|f(z)| +|f'(z)||z|+ \sum_{m=2}^{\infty}|a_m|r^m $
	\item $|f(z)|+ \sum_{m=N}^{\infty}\left| \frac{f^{(m)}(z)}{m!} \right|r^m$
	\item $|f(\omega(z))|+ \sum_{m=1}^{\infty}|a_m|r^m+ \frac{1+ar}{(1+a)(1-r)}{||f_0||}_{r}^{2}$
\end{enumerate}
For some important work in this direction, we refer to see \cite{Huang-2020,Liu-2018}.

Recall that $f$ is subordinate to  $\phi$, written as $f\prec \phi$, if $f(z)=\phi(w(z))$, where $w$ is a Schwarz function.  Further if $\phi$ is univalent, then $f\prec \phi$ if and only if $f(\mathbb{D})\subseteq \phi(\mathbb{D})$ and $f(0)=\phi(0)$. Note that the concept of subordination for analytic functions can be adopted for harmonic functions without any change, see \cite{schaubroeck-2000}. Now let us consider the class $\mathcal{A}$ which consists of analytic function with power series of the form $f(z)=z+\sum_{m=2}^{\infty}a_mz^m$, and it's important subclass of univalent functions denoted by $\mathcal{S}$. Further, the classes of Ma-Minda starlike and convex function \cite{minda94}, respectively are defined as:
\begin{equation*}
\mathcal{S}^*(\psi):= \biggl\{f\in \mathcal{A} : \frac{zf'(z)}{f(z)} \prec \psi(z) \biggl\}
\end{equation*}
and
\begin{equation*}\label{mindaclass}
\mathcal{C}(\psi):= \biggl\{f\in \mathcal{A} : 1+\frac{zf''(z)}{f'(z)} \prec \psi(z) \biggl\},
\end{equation*}
where  $\psi$ is analytic and univalent with $\Re{\psi(z)}>0$, $\psi'(0)>0$, $\psi(0)=1$ and $\psi(\mathbb{D})$ is symmetric about real axis. Note that $\psi \in \mathcal{P}$, the class of normalized Carath\'{e}odory functions. Also when $\psi(z)=(1+z)/(1-z)$, $\mathcal{S}^*(\psi)$  and $\mathcal{C}(\psi)$ reduces to the standard classes $\mathcal{S}^*$ and $\mathcal{C}$ of univalent starlike and convex functions.

In view of Muhanna~\cite{muhanna10}, the class $S(f)$ of functions $g$ subordinate to $f$ has Bohr phenomenon if for any $g(z)=\sum_{m=0}^{\infty}b_m z^m \in S(f), $ there exist an $r_0\in(0,1]$ such that
\begin{equation}\label{intro3-Muhanna}
\sum_{m=1}^{\infty}|b_m|r^m \leq d(f(0), \partial{f(\mathbb{D})})
\end{equation}
holds for $|z|=r\leq r_0,$ where $d(f(0), \partial{f(\mathbb{D})})$ denotes the Euclidean distance between $f(0)$ and the boundary of domain $f(\mathbb{D})$. When $h$ in $S(h)$ is an harmonic mapping, several Bohr inequalities have been investigated with certain assumptions on the analytic part $f$. Bhowmik and Das \cite{Bhowmik-2019} assumed that $h$ is a sense-preserving $K$-quasiconformal harmonic mapping, where the analytic part $f$ is univalent of convex univalent function, and their result states as follows:
\begin{theorem}\cite[Theorem~1]{Bhowmik-2019}\label{Bhowmik-Thm1}
	suppose that $h(z)=f(z)+ \overline{g(z)}=\sum_{m=0}^{\infty}a_m z^m+ \overline{\sum_{m=1}^{\infty}b_m {z}^m} $ be a sense-preserving $K$-quasiconformal harmonic mapping defined in $\mathbb{D}$ such that $f$ is {\it univalent} and $h_{1}(z)=f_{1}(z)+ \overline{g_{1}(z)}=\sum_{m=0}^{\infty}c_m z^m+ \overline{\sum_{m=1}^{\infty}d_m {z}^m} \in S(h)$. Then
	\begin{equation*}
	\sum_{m=1}^{\infty}(|c_m|+|d_m|) r^m \leq d(f(0), \partial{f(\mathbb{D})})
	\end{equation*}
	holds for $|z|=r\leq (5K+1-\sqrt{8K(3K+1)})/(K+1)$.
	This result is sharp for the function $p(z)= z/(1-z)^2+k\overline{z/(1-z)^2}$, where $k=(K-1)/(K+1)$.
	Moreover, if we take $f$ to be {\it convex univalent} then the result holds for
	$r \leq r_0= (K + 1)/(5K + 1)$ with sharpness for the function $q(z)= z/(1 + z) + k\overline{z/(1 - z)}$.
\end{theorem}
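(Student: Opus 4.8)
\medskip
\noindent\textbf{Sketch of a proof.} The plan is to split $\sum_{m\ge1}(|c_m|+|d_m|)r^m$ into its analytic and co-analytic parts, bound the analytic part through the coefficient theory of subordinate functions, transfer that bound to the co-analytic part via the dilatation, and finally reduce everything to an elementary inequality in $r$ that the Koebe covering theorem resolves. Since $h_1\prec h$, there is a Schwarz function $\omega$ with $h_1=h\circ\omega$; equating analytic and anti-analytic parts gives $f_1=f\circ\omega$ and $g_1=g\circ\omega$, so $f_1\prec f$ and $g_1\prec g$. As $f$ is univalent, $a_1=f'(0)\neq0$ and $F:=(f-a_0)/a_1\in\mathcal S$; Rogosinski's inequality for functions subordinate to a univalent function then gives $|c_m|\le m|a_1|$ for every $m\ge1$, so that
\begin{equation*}
\sum_{m=1}^{\infty}|c_m|r^m\le|a_1|\sum_{m=1}^{\infty}mr^m=\frac{|a_1|\,r}{(1-r)^2}.
\end{equation*}
When $f$ is convex one has $|a_n|\le|a_1|$ and the sharper Rogosinski bound $|c_m|\le|a_1|$, whence $\sum_{m\ge1}|c_m|r^m\le|a_1|r/(1-r)$.

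For the co-analytic part, write $w_h=g'/f'$, so that $|w_h|\le k$ on $\mathbb{D}$; then $W:=w_h\circ\omega$ is the dilatation of $h_1$, still satisfies $|W|\le k$, and $g_1'=Wf_1'$. Writing $W_j$ for the Taylor coefficients of $W$, Bohr's theorem (Theorem~\ref{BohrTheorem}, applied to $W/k$) gives $\sum_{j\ge0}|W_j|\rho^j\le k$ whenever $\rho\le1/3$; multiplying the power series in $g_1'=Wf_1'$ and comparing coefficients then yields, for $|z|=\rho\le1/3$,
\begin{equation*}
\sum_{m=1}^{\infty}m|d_m|\rho^{m-1}\le\Big(\sum_{j\ge0}|W_j|\rho^{j}\Big)\Big(\sum_{m=1}^{\infty}m|c_m|\rho^{m-1}\Big)\le k\sum_{m=1}^{\infty}m|c_m|\rho^{m-1}.
\end{equation*}
Integrating in $\rho$ over $[0,r]$ gives $\sum_{m\ge1}|d_m|r^m\le k\sum_{m\ge1}|c_m|r^m$, and combining this with the previous paragraph (and $1+k=2K/(K+1)$) we obtain, for $r\le1/3$,
\begin{equation*}
\sum_{m=1}^{\infty}\big(|c_m|+|d_m|\big)r^m\le\frac{2K}{K+1}\cdot\frac{|a_1|\,r}{(1-r)^2},
\end{equation*}
with $(1-r)^2$ replaced by $1-r$ in the convex case.

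It remains to bound $d(f(0),\partial f(\mathbb{D}))$ from below. The Koebe quarter theorem gives $f(\mathbb{D})=a_0+a_1F(\mathbb{D})\supseteq\{w:|w-a_0|<|a_1|/4\}$, hence $d(f(0),\partial f(\mathbb{D}))\ge|a_1|/4$, and $\ge|a_1|/2$ when $f$ is convex. Thus the asserted inequality holds as soon as $\frac{2K}{K+1}\cdot\frac{r}{(1-r)^2}\le\frac14$, i.e. $(K+1)r^2-(10K+2)r+(K+1)\ge0$; since the discriminant of this quadratic equals $32K(3K+1)$, it holds precisely for $r\le(5K+1-\sqrt{8K(3K+1)})/(K+1)$, which is the claimed radius. (One verifies directly that this radius is $\le1/3$ for every $K\ge1$, so the restriction $\rho\le1/3$ used above costs nothing.) In the convex case the analogous requirement $\frac{2K}{K+1}\cdot\frac{r}{1-r}\le\frac12$ is linear and rearranges to $r\le(K+1)/(5K+1)$. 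For sharpness one tests the extremal mappings of the statement: for $p(z)=z/(1-z)^2+k\overline{z/(1-z)^2}$ with $h_1=p$, $f$ is the Koebe function, $d(f(0),\partial f(\mathbb{D}))=1/4$, and $\sum_{m\ge1}(|c_m|+|d_m|)r^m=(1+k)\,r/(1-r)^2$, so the above inequality becomes an equality exactly at $r=(5K+1-\sqrt{8K(3K+1)})/(K+1)$; the function $q$ serves the same purpose in the convex case.

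I expect the crux to be the transfer step $g_1'=Wf_1'\Rightarrow\sum_{m\ge1}|d_m|r^m\le k\sum_{m\ge1}|c_m|r^m$: it is precisely here that Bohr's classical theorem is invoked, to dominate $\sum_{j}|W_j|\rho^j$ by $k$, and this is what confines the argument to $\rho\le1/3$; the whole scheme then closes up only because the sharp radii produced by the final algebra do lie below $1/3$ for all $K\ge1$. The remaining ingredients — Rogosinski's subordination estimates and the Koebe and convex covering theorems — are classical and enter only as auxiliary tools.
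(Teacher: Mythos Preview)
The paper does not itself prove this statement; it is quoted from Bhowmik and Das \cite{Bhowmik-2019} as background, so there is no in-paper proof to compare against directly. Your argument is correct and is essentially the one in that source.

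It is still worth noting how your route relates to the paper's own machinery in Theorem~\ref{K-quasi3}. You apply the dilatation relation at the level of $h_1$ (writing $g_1'=Wf_1'$ with $W=w_h\circ\omega$), invoke the classical Bohr inequality on $W/k$, and then bound $\sum|c_m|r^m$ via Rogosinski's coefficient estimate for $f_1\prec f$ with $f$ univalent (resp.\ convex). The paper reverses the order: it first controls $\sum|b_m|r^m\le k\sum|a_m|r^m$ from $g'=k\phi f'$ through Lemma~\ref{quasi-serieslem}, then passes to $h_1$ using Lemma~\ref{series-lem} on $f_1\prec f$ and $g_1\prec g$, and finally compares with the class extremal $\hat f_0$ rather than the Koebe function. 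Both routes rest on the same pivot---Bohr's theorem dominating the majorant series of the dilatation, which is what introduces the auxiliary restriction $r\le 1/3$---and both finish with the identical covering estimate and quadratic (resp.\ linear) algebra. Your direct use of Rogosinski is the natural choice here because $f$ is only assumed univalent; the paper's Lemma~\ref{series-lem} approach is tailored to the Ma--Minda setting, where it produces the extremal $\hat f_0$ in place of the Koebe function.
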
 
Previously, several improved classical Bohr type inequality were discussed in \cite{ponnu} for the sense-preserving $K$-quasiconformal harmonic mappings such that $f$ satisfy $|f(z)|<1$ and it's applications to the corresponding analytic functions by taking $k=0$ were shown. We state here one among them:
\begin{theorem}\cite[Theorem~2.9]{ponnu}
	Let $h(z)=f(z)+ \overline{g(z)}=\sum_{m=0}^{\infty}a_m z^m+ \overline{\sum_{m=1}^{\infty}b_m {z}^m} $ be a sense-preserving $K$-quasiconformal harmonic mapping defined in $\mathbb{D}$ such that $|f(z)|<1$ and $0\leq a=|a_0|<1$. Then the following inequality holds
	\begin{equation*}
	|f(z)|+ 	\sum_{m=1}^{\infty}(|a_m|+|b_m|) r^m \leq 1,
	\end{equation*}  
	for all $a\geq \alpha_k$ and $|z|=r\leq r_{a,k}$ (the radius is sharp), where
	$$\alpha_k =\frac{\sqrt{k^2+12k 12}-(2k+3)}{k+1} \quad \text{and} \quad r_{a,k}= \frac{B_{a,k}- (k+2)(1+a)}{2a^2 (k+1)+ 2ak},$$
	where
	$B_{a,k}=\sqrt{a^2(k^2+8k+8)+ 2a(k^2+6k+4)+ (k+2)^2}.$
\end{theorem}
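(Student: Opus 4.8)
The left-hand side splits into the three pieces $|f(z)|$, $\sum_{m\ge1}|a_m|r^m$, and $\sum_{m\ge1}|b_m|r^m$, and the plan is to bound each and then solve the resulting inequality for $r$. Since $|f(z)|<1$ on $\mathbb{D}$ with $|a_0|=a$, the Schwarz--Pick lemma gives $|f(z)|\le(a+r)/(1+ar)$ on $|z|=r$. For the Taylor tail of $f$ I would use the sharp estimate $\sum_{m\ge1}|a_m|r^m\le(1-a^2)r/(1-ar)$: writing $f=(a+z\varphi)/(1+az\varphi)$ by the Schwarz--Pick factorization (with $\varphi$ an analytic self-map of $\overline{\mathbb{D}}$), expanding $f-a=(1-a^2)\sum_{j\ge1}(-a)^{j-1}(z\varphi)^j$, and applying Theorem~\ref{BohrTheorem} to each power $\varphi^j$ delivers it, at least while $r$ stays in the range where Bohr's theorem is usable. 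For the co-analytic part the hypothesis $|w_h|=|g'/f'|\le k$ is the only input needed: from $g'(z)=w_h(z)f'(z)$ one bounds $\sum_m m|b_m|\rho^{m-1}$ by the Cauchy product of $\sum_m|(w_h)_m|\rho^m$ and $\sum_m m|a_m|\rho^{m-1}$, estimates the first factor by $k$ (Bohr's theorem applied to $w_h/k$), and integrates in $\rho$ over $[0,r]$ to obtain $\sum_{m\ge1}|b_m|r^m\le k\sum_{m\ge1}|a_m|r^m\le k(1-a^2)r/(1-ar)$.

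Adding the three bounds, the left-hand side is at most
\[
\Phi(r):=\frac{a+r}{1+ar}+(1+k)\frac{(1-a^2)r}{1-ar},
\]
so it remains to show $\Phi(r)\le1$ for $r\le r_{a,k}$. Using $1-(a+r)/(1+ar)=(1-a)(1-r)/(1+ar)$, clearing denominators and cancelling the positive factor $1-a$, the inequality $\Phi(r)\le1$ becomes
\[
\bigl(a^2(k+1)+ak\bigr)r^2+(k+2)(1+a)r-1\le0,
\]
a quadratic in $r$ with positive leading coefficient and constant term $-1$, hence nonpositive exactly on $[0,r_{a,k}]$ with $r_{a,k}$ its unique positive root; the quadratic formula reproduces precisely the stated $r_{a,k}$, with $B_{a,k}$ the square root of the discriminant $(k+2)^2(1+a)^2+4\bigl(a^2(k+1)+ak\bigr)$.

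Two points finish the argument. First, the estimates above require $r$ to lie in the range covered by Bohr's theorem, that is, $r\le1/3$; solving $r_{a,k}=1/3$ gives $(1+k)a^2+2(2k+3)a+3(k-1)=0$, whose positive root is exactly $\alpha_k=(\sqrt{k^2+12k+12}-(2k+3))/(k+1)$, and since $r_{a,k}$ is decreasing in $a$ the hypothesis $a\ge\alpha_k$ is precisely what keeps the whole interval $[0,r_{a,k}]$ admissible. Second, for sharpness take $f(z)=(a+z)/(1+az)$, an automorphism of $\mathbb{D}$ with $f(0)=a$, together with constant dilatation $w_h\equiv k$, so that $g(z)=k(f(z)-a)=k(1-a^2)z/(1+az)$; the resulting $K$-quasiconformal harmonic map $h(z)=(a+z)/(1+az)+k\,\overline{(1-a^2)z/(1+az)}$ makes every inequality above an equality at $z=r=r_{a,k}$, so the radius cannot be enlarged. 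The conceptual heart, and the main obstacle, is the sharp coefficient-sum bound $\sum|a_m|r^m\le(1-a^2)r/(1-ar)$ together with its transfer to the $b_m$: the crude estimate $|a_m|\le1-a^2$ only gives $(1-a^2)r/(1-r)$, which yields a strictly smaller, non-sharp radius, so the factorization-plus-Bohr argument (or a citation of the corresponding known lemma) is essential, and one must check that the range it requires is exactly the one encoded by $\alpha_k$.
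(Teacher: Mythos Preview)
This theorem is not proved in the present paper: it is quoted in the introduction as \cite[Theorem~2.9]{ponnu} to motivate the later results, and no argument for it is given here. There is therefore no ``paper's own proof'' to compare against.

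That said, your proof is correct and is essentially the argument of the original source. The three ingredients you isolate are exactly the right ones: the Schwarz--Pick bound $|f(z)|\le (a+r)/(1+ar)$; the sharp majorant-series bound
\[
\sum_{m\ge1}|a_{m}|r^{m}\le \frac{(1-a^{2})r}{1-ar}\qquad (r\le \tfrac13),
\]
obtained from the factorization $f=(a+z\varphi)/(1+az\varphi)$ and Bohr's theorem applied to each $\varphi^{j}$; and the transfer $\sum|b_{m}|r^{m}\le k\sum|a_{m}|r^{m}$ via $g'=w_{h}f'$, the Cauchy product, and Bohr's theorem applied to $w_{h}/k$. The latter step is precisely the content of \cite[Theorem~2.1]{ponnu}, the result that the present paper's Lemma~\ref{quasi-serieslem} generalizes. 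Your reduction to the quadratic
\[
\bigl(a^{2}(k+1)+ak\bigr)r^{2}+(k+2)(1+a)r-1\le 0
\]
is clean, and your identification of $\alpha_{k}$ as the value of $a$ at which $r_{a,k}=\tfrac13$ (so that the Bohr-range constraint $r\le\tfrac13$ is automatic for $a\ge\alpha_{k}$) is exactly the reason for the hypothesis on $a$. The extremal $h(z)=(a+z)/(1+az)+k\,\overline{(1-a^{2})z/(1+az)}$ is the one used in \cite{ponnu}, and your remark that the cruder bound $(1-a^{2})r/(1-r)$ would not yield the sharp radius is to the point.
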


Motivated by the above results and noticing the role of sharp coefficient bounds for functions in a given class to establish such inequalities, see~\cite{jain2019,kamal-mediter2021,hamada-2021,ganga-iranian,gangania-bohr}, and observing that sharp coefficients bounds are not available in general for the class $\mathcal{S}^*(\psi)$ and $\mathcal{C}(\psi)$. It is promising to discuss the case where analytic part $f$ of a sense-preserving $K$-quasiconformal harmonic mapping belongs to the general class $\mathcal{S}^*(\psi)$ and $\mathcal{C}(\psi)$. More precisely,
\begin{definition}
	Let $f\in \mathcal{S}^*(\psi)$ (or $\mathcal{C}(\psi)$), and $h(z)=f(z)+ \overline{g(z)}=z+\sum_{m=2}^{\infty}a_m z^m+ \overline{\sum_{m=1}^{\infty}b_m {z}^m} $ be a sense-preserving $K$-quasiconformal harmonic mapping defined in $\mathbb{D}$. Then
	\begin{equation*}
	S_{\psi}(h):= \left\{h_{1}(z) : h_{1}(z) \prec h(z)  \right\},
	\end{equation*}
	where $h_{1}(z)=f_{1}(z)+ \overline{g_{1}(z)}=z+\sum_{m=2}^{\infty}c_m z^m+ \overline{\sum_{m=1}^{\infty}d_m {z}^m}$.
\end{definition}

Similarly, in the literature of coefficient's problems, much of the attention has been on the inequalities related to the {\it logarithmic coefficients}, see \cite{Milin-history1985}, due to it's important role in settling the Bieberbach conjecture, we refer to see the recent articles \cite{choLog-2019,PonuSugawa-Log2021,Roth-2007} and their references, which are defined by the following logarithmic power series
\begin{equation}\label{3-logseries}
\log\left(\frac{f(z)}{z} \right)= 2\sum_{m=1}^{\infty} \gamma_m z^m,
\end{equation}
where $f\in \mathcal{S}$. Recently, Adegani et. al~\cite[Theorem~1, Sec~2]{choLog-2019} settled the problem of sharp bounds for $\gamma_m$ for the functions in the class $\mathcal{S}^*(\psi)$, but obtained only three initial logarithmic coefficient's bounds for the class  $\mathcal{C}(\psi)$ in \cite[Theorem~2, Sec~2]{choLog-2019}. Therefore, we consider the problem of Bohr inequality similar to \eqref{intro3-classy} for $\log(f(z)/z)$, where $f$ either belongs to the classes $\mathcal{S}^*(\psi)$ and $\mathcal{C}(\psi)$ or satisfies certain standard differential subordination\cite{subbook}. The reason why we consider the Bohr inequality of classical type for the series \eqref{3-logseries} instead of \eqref{intro3-Muhanna} follows with the observations discussed by Bhowmik and Das: The quantity $d(g, \partial{\Omega})$, where $\Omega$ is the image of $\mathbb{D}$ under $g(z):=\log(f(z)/z)$, can be arbitrarily small positive number for $f\in \mathcal{S}$ which is observed with the help of univalent polynomials $f_n(z)=z+(z^2/n)$ for each $n\geq2$ such that the image of $\log(f_n(z)/z)$ does not include the point $\log(1+(1/n))$. 

While proving our result related to Bohr inequality for the logarithmic power series, we surprisingly get estimates for the logarithmic coefficient's bounds for the class $\mathcal{C}(\psi)$ for the case $\psi(\mathbb{D})$ being convex, see Theorem~\ref{logcoef-convexclass}. {\it Note that till date obtaining sharp bounds even for a particular class of convex functions is an open problem}.

\section{Bohr phenomenon for $K$-quasiconformal mappings}

For convenience, let us consider the functions $f_n \in \mathcal{S}^*(\psi)$, $n\in \{0,1,2,\cdots\}$ defined in the unit disk $\mathbb{D}$ as 
\begin{equation}\label{extremals3}
\frac{zf'_n(z)}{f_n(z)}= \psi(z^{n+1}).
\end{equation}
In case when the coefficients of $f_n$ in its power series expansion are positive, we denote $f_n$ by $\hat{f_n}$. Since sharp bounds for the Taylor coefficients in general for the functions in the class $\mathcal{S}^*(\psi)$ are yet not known. Therefore, we need the following result, obtained in \cite[Lemma~2.1, Sec~2]{kamal-mediter2021}.

\begin{lemma}\label{series-lem}
	let $f(z)=\sum_{n=0}^{\infty}a_n z^n$ and $g(z)=\sum_{k=0}^{\infty}b_k z^k$ be analytic in $\mathbb{D}$ and $g\prec f$, then 
	\begin{equation*}
	\sum_{k=N}^{\infty}|b_k|r^k \leq \sum_{n=N}^{\infty}|a_n|r^n
	\end{equation*}
	for $|z|=r\leq \frac{1}{3}$ and $N\in \mathbb{N}$.
\end{lemma}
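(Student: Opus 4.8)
The plan is to reduce the statement to the classical Bohr inequality (Theorem~\ref{BohrTheorem}), applied to the powers of the Schwarz function attached to the subordination rather than to $g$ itself --- a detour that is unavoidable, since the Maclaurin coefficients of a subordinate function need not be controlled term by term by those of the majorant. Concretely, $g\prec f$ furnishes a Schwarz function $\omega$ with $g=f\circ\omega$; by the Schwarz lemma $\omega(z)=z\,\tau(z)$ with $\tau$ analytic and $|\tau(z)|\le 1$ on $\mathbb{D}$, so $\omega(z)^n=z^n\tau(z)^n$ with $|\tau^n|\le 1$ for every $n\ge 1$. Substituting into $g(z)=\sum_{n\ge 0}a_n\,\omega(z)^n$ and collecting powers of $z$ gives $b_k=\sum_{n=1}^{k}a_n\,[\omega^n]_k$, where $[\,\cdot\,]_k$ denotes the $k$-th Maclaurin coefficient and $[\omega^n]_k=0$ whenever $k<n$ because $\omega(0)=0$.

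The crux is a weighted $\ell^1$-estimate for the powers of $\omega$: for each fixed $n\ge 1$ and $r\le \tfrac13$,
\begin{equation*}
\sum_{k\ge n}\bigl|[\omega^n]_k\bigr|\,r^k=r^n\sum_{j\ge 0}\bigl|[\tau^n]_j\bigr|\,r^j\le r^n,
\end{equation*}
the last inequality being exactly Theorem~\ref{BohrTheorem} for the bounded analytic function $\tau^n$; this is precisely where the range $r\le\tfrac13$ is consumed, and the constant is sharp here because $\tau^n(0)$ can be an arbitrary point of $\overline{\mathbb{D}}$. Granting this, the triangle inequality on $b_k$ followed by an interchange of the resulting nonnegative double sum (Tonelli) yields
\begin{equation*}
\sum_{k\ge N}|b_k|\,r^k\le\sum_{n\ge 1}|a_n|\sum_{k\ge \max(n,N)}\bigl|[\omega^n]_k\bigr|\,r^k\le\sum_{n\ge 1}|a_n|\,r^n,
\end{equation*}
where for $n\ge N$ the inner sum is bounded by $r^n$ via the crux, while for $1\le n\le N-1$ one discards finitely many nonnegative terms to fall back on the full tail $\sum_{k\ge n}|[\omega^n]_k|\,r^k\le r^n$. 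This gives the asserted inequality.

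I expect the one genuine difficulty to be the weighted estimate in the second paragraph: the decisive idea is to factor $z^n$ out of $\omega^n$ so that the residual factor $\tau^n$ is again an analytic self-map of $\mathbb{D}$ and the sharp classical Bohr radius $\tfrac13$ applies verbatim; attempting instead to bound the coefficients of $g$ directly leads nowhere. The remaining ingredients --- the coefficient expansion of a composition and the legitimacy of exchanging the order of summation --- are routine, since every quantity involved is nonnegative.
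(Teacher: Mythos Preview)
Your argument is sound through the key estimate $\sum_{k\ge n}\bigl|[\omega^n]_k\bigr|\,r^k\le r^n$, and for $N=1$ this indeed closes the proof; this is essentially the classical argument (the present paper does not prove the lemma itself but quotes it from \cite{kamal-mediter2021}). The gap is in your last sentence: the final display you obtain ends with $\sum_{n\ge 1}|a_n|\,r^n$, not $\sum_{n\ge N}|a_n|\,r^n$. For $1\le n\le N-1$ your ``discard finitely many nonnegative terms'' manoeuvre bounds the inner sum only by $r^n$ with $n<N$, so the contribution $|a_n|\,r^n$ lands below the cutoff $N$ and cannot be absorbed into the tail $\sum_{n\ge N}|a_n|\,r^n$. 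Thus for $N\ge 2$ you have \emph{not} established the asserted inequality.

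This defect is intrinsic, not a matter of bookkeeping: the inequality as stated is false for $N\ge 2$. Take $f(z)=z$ and $g(z)=z^2=f(z^2)$, so that $g\prec f$ with Schwarz map $\omega(z)=z^2$; then for $N=2$ one has $\sum_{k\ge 2}|b_k|\,r^k=r^2$ while $\sum_{n\ge 2}|a_n|\,r^n=0$. Your method therefore proves exactly the correct statement --- the case $N=1$, which is also the case invoked explicitly in the proof of Theorem~\ref{K-quasi3} --- and no refinement of the argument can reach the general $N$ claimed in the lemma, because that target is simply not true.
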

which was further used to prove \cite[Corollary~2.1, Sec~2]{kamal-mediter2021}, and there replacing $g(z)$ by $g(z)/M$ for $M>0$ we easily get:

\begin{lemma}\label{quasi-serieslem}
	Let the analytic functions $f, g$ and $h$ satisfies $g(z)=M\phi(z)f(\omega(z))$ in $\mathbb{D}$, where $\omega$ is the Schwarz function. Assume $|\phi(z)|\leq \tau$ for $|z|< \tau\leq1$. Then
	\begin{equation*}
	\sum_{k=N}^{\infty}|b_k|r^k \leq \tau M \sum_{n=N}^{\infty}|a_n|r^n, \quad\text{for}\quad 0\leq |z|=r\leq \frac{\tau}{3},
	\end{equation*}
	where $M>0$ and $N\in\mathbb{N}$.
\end{lemma}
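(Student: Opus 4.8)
The plan is to pass at once to the case $M=1$ (the general statement then follows by applying the result to $g/M$, exactly as indicated above the lemma) and to separate the two sources of smallness in the identity $g=\phi\cdot(f\circ\omega)$: the Schwarz composition $f\circ\omega$ is controlled by Lemma~\ref{series-lem}, while the analytic multiplier $\phi$ is controlled by the classical Bohr inequality, Theorem~\ref{BohrTheorem}.

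First I would put $F(z):=f(\omega(z))=\sum_{m=0}^{\infty}\beta_m z^m$. Since $\omega$ is a Schwarz function, $F\prec f$, so Lemma~\ref{series-lem} yields
\[
\sum_{m=N}^{\infty}|\beta_m|\,r^m\ \le\ \sum_{n=N}^{\infty}|a_n|\,r^n\qquad (r\le 1/3).
\]
Writing $\phi(z)=\sum_{j=0}^{\infty}p_j z^j$ and expanding $g/M=\phi F$ as a Cauchy product, the triangle inequality together with an interchange of summation bounds the tail $\tfrac1M\sum_{k=N}^{\infty}|b_k|r^k$ by a product of $\sum_{j=0}^{\infty}|p_j|r^j$ with a tail of $\sum_m|\beta_m|r^m$; here one uses $\omega(0)=0$, so that $\beta_0=a_0=0$ in the normalised setting of the paper and the low-order part of $F$ does not leak into the tail of $g$.

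The crux is the multiplier estimate
\[
\sum_{j=0}^{\infty}|p_j|\,r^j\ \le\ \tau\qquad (0\le r\le \tau/3),
\]
which I would prove by rescaling rather than by estimating the coefficients one at a time. The function $\Phi(w):=\phi(\tau w)/\tau$ is analytic on $\mathbb{D}$ with $|\Phi(w)|\le 1$, and its $j$-th Taylor coefficient equals $\tau^{j-1}p_j$; hence Theorem~\ref{BohrTheorem} applied to $\Phi$ gives $\sum_{j}|p_j|\tau^{j-1}s^j\le 1$ for $0\le s\le 1/3$, and taking $s=r/\tau$ produces exactly the displayed bound. Because $\tau\le 1$, the range $r\le\tau/3$ lies inside the range $r\le 1/3$ required by Lemma~\ref{series-lem}, so the two estimates are compatible; combining them gives
\[
\sum_{k=N}^{\infty}|b_k|\,r^k\ \le\ \tau M\sum_{n=N}^{\infty}|a_n|\,r^n,\qquad 0\le r\le \frac{\tau}{3}.
\]

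The step I expect to be the real obstacle is the \emph{sharpness} of the multiplier estimate. Estimating $|p_j|\le\tau^{1-j}$ by Cauchy's inequality and summing a geometric series only gives $\sum_j|p_j|r^j\le\tau^2/(\tau-r)$, which at $r=\tau/3$ equals $\tfrac32\tau$ and is too weak for the asserted inequality; it is precisely the use of the classical Bohr radius $1/3$ for $\Phi$ that forces the constant down to $\tau$ and pins the final radius at $\tau/3$ rather than anything larger. A secondary point of care is the interchange of summations and the alignment of the tail index $N$ on the two sides, which depends on the normalisation $a_0=0$; without it a stray term proportional to $|a_0|\sum_{j\ge1}|p_j|r^j$ remains and the clean inequality no longer holds.
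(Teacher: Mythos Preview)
The paper does not actually prove this lemma: the sentence preceding it says only that one obtains it from \cite[Corollary~2.1]{kamal-mediter2021} by replacing $g$ with $g/M$, so there is no in-paper argument to compare against. Your route---splitting $g=\phi\cdot(f\circ\omega)$, controlling $f\circ\omega\prec f$ by Lemma~\ref{series-lem}, and controlling the multiplier by applying Theorem~\ref{BohrTheorem} to the rescaled map $\Phi(w)=\phi(\tau w)/\tau$---is a correct self-contained argument for the case the paper actually uses ($N=1$ with $a_0=0$; see the proof of Theorem~\ref{K-quasi3}). The rescaling that converts the bound $|\phi|\le\tau$ on $|z|<\tau$ into the coefficient estimate $\sum_j|p_j|r^j\le\tau$ for $r\le\tau/3$ is exactly the right mechanism, and your remark that crude Cauchy estimates would only give $3\tau/2$ is on point.

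Your caveat about tail alignment is not a ``secondary point of care'' but a defect in the \emph{statement}, not in your method. For $N\ge 2$ the lemma as written is false: take $f(z)=z$, $\omega(z)=z$, $\phi(z)=z$, $M=\tau=1$; then $g(z)=z^2$, so $\sum_{k\ge 2}|b_k|r^k=r^2>0$ while $\sum_{n\ge 2}|a_n|r^n=0$. Hence the Cauchy-product bound $\sum_{k\ge N}|b_k|r^k\le\bigl(\sum_j|p_j|r^j\bigr)\bigl(\sum_{m\ge N}|\beta_m|r^m\bigr)$ that you need cannot hold in general, and no argument will salvage the full claim; it is only the normalization $a_0=0$ (forcing $\beta_0=0$) together with $N=1$ that makes the index sets $\{j+m\ge 1\}$ and $\{m\ge 1\}$ contribute identically, and that is precisely the setting in which the lemma is invoked.
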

The Lemma~\ref{quasi-serieslem}  generalizes the result of Alkhaleefah et al.~\cite[Theorem~2.1]{ponnu}. For other inequalities of this kind, see \cite{MuhannaOperator-2021}.

Now the following result is a generalization of Theorem~\ref{Bhowmik-Thm1} in the sense that $f$ be a Ma-Minda univalent starlike functions, while it's convex analogue is presented in Theorem~\ref{K-quasi3-conv}.	
\begin{theorem}\label{K-quasi3}
	Let $h(z)=f(z)+ \overline{g(z)}=z+\sum_{m=2}^{\infty}a_m z^m+ \overline{\sum_{m=1}^{\infty}b_m {z}^m} $ be a sense-preserving $K$-quasiconformal harmonic mapping defined in $\mathbb{D}$ such that $f\in \mathcal{S}^*(\psi)$ and $h_{1}(z)=f_{1}(z)+ \overline{g_{1}(z)}=z+\sum_{m=2}^{\infty}c_m z^m+ \overline{\sum_{m=1}^{\infty}d_m {z}^m} \in S_{\psi}(h)$. Then
	\begin{equation}\label{bohrcombi3.1}
	\sum_{m=1}^{\infty}(|c_m|+|d_m|) r^m \leq d(f(0), \partial{f(\mathbb{D})})
	\end{equation}
	holds for $|z|\leq r^*=\min\{r_0, \frac{1}{3} \}$, where $r_0$ is the unique root in $(0,1)$ of 
	\begin{equation*}
	\frac{2K}{K+1}\hat{f}_0(r)=-f_0(-1).
	\end{equation*}
	The result is sharp for the function $p(z)=f_0(z)+k \overline{f_0(z)}$, where $\hat{f_0}=f_0$ as defined in \eqref{extremals3} when $r^*=r_0$.
\end{theorem}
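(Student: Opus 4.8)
The plan is to estimate the analytic and co-analytic halves of the left-hand side of \eqref{bohrcombi3.1} separately and then compare with the Koebe-type growth bound for the Ma--Minda class. First I would observe that $f(0)=0$ and, since $f\in\mathcal{S}^*(\psi)$, the image $f(\mathbb{D})$ is starlike with respect to the origin, so $d(f(0),\partial f(\mathbb{D}))=\lim_{r\to1^-}\min_{|z|=r}|f(z)|$; the Ma--Minda growth theorem then gives $\min_{|z|=r}|f(z)|\ge -f_0(-r)$, where $f_0$ is the extremal in \eqref{extremals3} with $n=0$, whence $d(f(0),\partial f(\mathbb{D}))\ge -f_0(-1)$. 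Thus it is enough to prove $\sum_{m\ge1}(|c_m|+|d_m|)r^m\le -f_0(-1)$ for $|z|=r\le r^*$.

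Next I would unwind the subordination. Since $h_1\prec h$, write $h_1=h\circ\omega$ with $\omega$ a Schwarz function; comparing analytic and co-analytic parts (which are unique up to additive constants, pinned down here by $g(0)=g_1(0)=0$) forces $f_1=f\circ\omega$ and $g_1=g\circ\omega$, so $f_1\prec f$ and $g_1\prec g$. Lemma~\ref{series-lem} then yields $\sum_{m\ge1}|c_m|r^m\le\sum_{m\ge1}|a_m|r^m$ and $\sum_{m\ge1}|d_m|r^m\le\sum_{m\ge1}|b_m|r^m$ for $r\le1/3$. To handle $\sum|b_m|r^m$ I would use $K$-quasiconformality: $g'=w_h f'$ with $|w_h|\le k$, so Lemma~\ref{quasi-serieslem} applied to $g'=k\cdot(w_h/k)\cdot f'$ (with $M=k$, $\tau=1$, $\omega=\mathrm{id}$, $N=1$, and the term $|b_1|\le k|a_1|$ treated directly) gives $\sum_{m\ge1}m|b_m|r^{m-1}\le k\sum_{m\ge1}m|a_m|r^{m-1}$ for $r\le1/3$; integrating in $r$ gives $\sum_{m\ge1}|b_m|r^m\le k\sum_{m\ge1}|a_m|r^m$.

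The remaining ingredient is the bound $\sum_{m\ge1}|a_m|r^m\le\hat f_0(r)$ for $f\in\mathcal{S}^*(\psi)$ and $r\le1/3$, which follows from a subordination of the form $f(z)/z\prec f_0(z)/z$ combined with Lemma~\ref{series-lem} (this is essentially \cite[Corollary~2.1]{kamal-mediter2021}). Collecting the estimates, for $r\le1/3$,
\begin{equation*}
\sum_{m\ge1}(|c_m|+|d_m|)r^m\le(1+k)\sum_{m\ge1}|a_m|r^m\le(1+k)\hat f_0(r)=\frac{2K}{K+1}\hat f_0(r),
\end{equation*}
using $1+k=2K/(K+1)$. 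Since $\hat f_0$ is strictly increasing on $(0,1)$ with $\hat f_0(0)=0$, the right-hand side is $\le -f_0(-1)$ precisely for $r\le r_0$, the unique root of $\frac{2K}{K+1}\hat f_0(r)=-f_0(-1)$; intersecting with the constraint $r\le1/3$ coming from the two lemmas gives \eqref{bohrcombi3.1} for $r\le r^*=\min\{r_0,1/3\}$.

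For sharpness when $r^*=r_0$, I would take $p(z)=f_0(z)+k\overline{f_0(z)}$ with $\hat f_0=f_0$ and choose $h_1=p$: then $w_p\equiv k$, so $p$ is a sense-preserving $K$-quasiconformal mapping with analytic part $f_0\in\mathcal{S}^*(\psi)$, every inequality above becomes an equality, and at $r=r_0$ the left-hand side equals $(1+k)f_0(r_0)=-f_0(-1)=d(f_0(0),\partial f_0(\mathbb{D}))$, the last equality holding because for the starlike extremal with nonnegative coefficients the boundary point nearest the origin is $f_0(-1)$. The main obstacle I expect is the inequality $\sum_{m\ge1}|a_m|r^m\le\hat f_0(r)$: with sharp Taylor coefficient bounds for $\mathcal{S}^*(\psi)$ unavailable, one cannot argue coefficient-by-coefficient and must pass through the subordination $f/z\prec f_0/z$ and Lemma~\ref{series-lem}, which is exactly what forces the argument to live on $r\le1/3$; a lesser technicality is justifying the termwise differentiation and integration used to turn $|w_h|\le k$ into a coefficient inequality.
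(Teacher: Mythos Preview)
Your proof is correct and follows essentially the same route as the paper: bound $d(f(0),\partial f(\mathbb{D}))$ below by $-f_0(-1)$ via the Ma--Minda growth theorem, use harmonic subordination to reduce $h_1$ to $h$ via Lemma~\ref{series-lem}, convert $K$-quasiconformality into $\sum|b_m|r^m\le k\sum|a_m|r^m$ through Lemma~\ref{quasi-serieslem} applied to $g'=k\phi f'$ followed by integration, and finally control $\sum|a_m|r^m$ by $\hat f_0(r)$ via $f/z\prec f_0/z$ and Lemma~\ref{series-lem}. The only differences are expository: you make the Koebe-type lower bound and the splitting $f_1\prec f$, $g_1\prec g$ explicit, and you handle the constant term $|b_1|\le k|a_1|$ separately, whereas the paper absorbs these steps into its application of the two lemmas.
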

\begin{proof}
	Clearly, $f'(z)\neq0$ as $f\in \mathcal{S}^*(\psi)$. From the dilatation $w_h(z)= g'/f'$, we have $|w_f|\leq k=(K-1)/(K+1) <1$. We consider the case $w_h$ being non constant, and the case $w_f(z)=c k$, $|c|=1$ can be handled on similar lines. Thus, by Maximum-Modulus principle there exist $\phi: \mathbb{D} \rightarrow \mathbb{D}$ such that
	\begin{equation}\label{dilatation3}
	g'(z)= k\phi(z) f'(z).
	\end{equation}
	Now in Lemma~\ref{quasi-serieslem} setting $\tau=1=N$ and $w(z)=z$ with $p(z)=\sum_{n=0}^{\infty}p_nz^n$ and $q(z)=\sum_{n=0}^{\infty} q_nz^n$ such that $p(z) = M \phi(z)q(z)$ for some $M > 0$, $\phi: \mathbb{D} \rightarrow \mathbb{D}$, we have for $|z|=r\leq 1/3$
	\begin{equation*}
	\sum_{n=0}^{\infty}|q_n| r^n \leq M \sum_{n=0}^{\infty}|p_n|r^n,
	\end{equation*}	
	which on applying to \eqref{dilatation3} and Lemma~\ref{series-lem} with $N=1$ on $f(z)/z \prec f_0(z)/z$, and then integrating from $0$ to $r$ finally yield
	\begin{equation}\label{coeffcomparision3}
	\sum_{m=1}^{\infty}|b_m|r^m \leq k \sum_{m=1}^{\infty} |a_m| r^m \leq k \hat{f_0}(r)
	\end{equation}
	for $|z|=r\leq 1/3$. Thus from \eqref{coeffcomparision3}, we have for $|z|=r\leq 1/3$
	\begin{equation}\label{finalcoefcombination3}
	\sum_{m=1}^{\infty} |a_m|r^m + \sum_{m=1}^{\infty} |b_m| r^m  \leq (1+k) \sum_{m=1}^{\infty}|a_m|r^m. 
	\end{equation}
	Now let us consider the function 
	\begin{equation*}
	T(r):= (1+k)\hat{f_0}(r)+ f_0(-1), \quad 0\leq r\leq1.
	\end{equation*}
	Then it is a continuous function of $r$ and $T'(r)\geq 0$. Also $T(0)=f_0(-1)<0$ and $T(1)>0$. Therefore, $T(r)$ has a unique root, say $r_0$ in $(0,1)$. Hence by \eqref{finalcoefcombination3}
	\begin{equation*}
	\sum_{m=1}^{\infty}|a_m| r^m+ \sum_{m=1}^{\infty} |b_m| r^m \leq d(f(0), \partial{f(\mathbb{D})})
	\end{equation*}
	holds for $|z|=r\leq \min\{r_0, 1/3 \}$, where $r_0$ is the root of $T(r)$. Now it remains to prove that
	\begin{equation*}
	\sum_{m=1}^{\infty}(|c_m|+|d_m|) r^m \leq \sum_{m=1}^{\infty}|a_m| r^m+ \sum_{m=1}^{\infty} |b_m| r^m
	\end{equation*}
	is true for $|z|\leq r^*$ which in fact holds, since $f_1 \prec f$ and $g_1 \prec g$ \cite[P.~164, Sec~2]{schaubroeck-2000} with the application of Lemma~\ref{series-lem} yields $	\sum_{m=1}^{\infty}|c_m| r^m \leq \sum_{m=1}^{\infty}|a_m| r^m$ and $\sum_{m=1}^{\infty}|d_m| r^m \leq \sum_{m=1}^{\infty} |b_m| r^m$ for $r\leq1/3$. Sharpness of the result follows for the function $p(z)$ as defined in the hypothesis by direct computation for $|z|=r\leq r^*$ such that equality holds in \eqref{bohrcombi3.1}. \qed
\end{proof}	

In the next result, we establish the Bohr-Rogosinski phenomenon for the the class $S_{\psi}(h)$ following the proof of Theorem~\ref{K-quasi3}. Note that if we take $N\rightarrow 1$ and $n\rightarrow \infty$ in the result below, we trace back Theorem~\ref{K-quasi3}.
\begin{corollary}
	Let $h(z)=f(z)+ \overline{g(z)}=z+\sum_{m=2}^{\infty}a_m z^m+ \overline{\sum_{m=1}^{\infty}b_m {z}^m} $ be a sense-preserving $K$-quasiconformal harmonic mapping defined in $\mathbb{D}$ such that $f\in \mathcal{S}^*(\psi)$ and $h_{1}(z)=f_{1}(z)+ \overline{g_{1}(z)}=z+\sum_{m=2}^{\infty}c_m z^m+ \overline{\sum_{m=1}^{\infty}d_m {z}^m} \in S_{\psi}(h)$. Then 
	\begin{equation}
	|f(z^n)|+   \sum_{m=N}^{\infty}(|c_m|+|d_m|) r^m \leq d(f(0), \partial{f(\mathbb{D})})
	\end{equation}
	holds for $|z|\leq r^*=\min\{r_0, \frac{1}{3} \}$, where $n\in \mathbb{N}$ and $r_0$ is the unique root in $(0,1)$ of 
	\begin{equation*}
	(K+1)(\hat{f_0}(r^n) + f_0(-1))+ 2K(\hat{f_0}(r)-S_{N}(\hat{f_0}))=0,
	\end{equation*}
	where $S_N(f)=\sum_{m=1}^{N-1}a_mr^m$.
	The result is sharp for the function $p(z)=f_0(z)+k \overline{f_0(z)}$, where $\hat{f_0}=f_0$ as defined in \eqref{extremals3} when $r^*=r_0$.
\end{corollary}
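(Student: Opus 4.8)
The plan is to transcribe the proof of Theorem~\ref{K-quasi3}, inserting the Rogosinski-type term $|f(z^n)|$ and the shifted summation index $N$. As there, from $f\in\mathcal{S}^*(\psi)$ one has $f'\neq 0$, so the dilatation $w_h=g'/f'$ satisfies $|w_h|\le k=(K-1)/(K+1)$; handling the non-constant case (the case $w_h\equiv ck$, $|c|=1$, being analogous), the maximum-modulus principle produces $\phi\colon\mathbb{D}\to\mathbb{D}$ with $g'(z)=k\phi(z)f'(z)$ as in \eqref{dilatation3}.

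Next I would record three estimates, all valid for $|z|=r\le 1/3$. First, applying Lemma~\ref{series-lem} to $f(z)/z\prec f_0(z)/z$ at the tail index $N-1$ and then multiplying by $r$ gives $\sum_{m=N}^{\infty}|a_m|r^m\le \hat{f_0}(r)-S_N(\hat{f_0})$; the same comparison carried out at $r^n$ (permissible since $r^n\le r\le 1/3$), together with the triangle inequality, gives $|f(z^n)|\le\sum_{m=1}^{\infty}|a_m|r^{nm}\le\hat{f_0}(r^n)$. Second, writing $g'=k\phi f'$ as an identity of power series, feeding it into Lemma~\ref{quasi-serieslem} (with $\tau=1$, $\omega(z)=z$, $M=k$) at the appropriately shifted index and integrating from $0$ to $r$ yields $\sum_{m=N}^{\infty}|b_m|r^m\le k\sum_{m=N}^{\infty}|a_m|r^m\le k(\hat{f_0}(r)-S_N(\hat{f_0}))$, exactly as in \eqref{coeffcomparision3}. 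Third, since $h_1\prec h$ forces $f_1\prec f$ and $g_1\prec g$ \cite[P.~164, Sec~2]{schaubroeck-2000}, Lemma~\ref{series-lem} gives $\sum_{m=N}^{\infty}|c_m|r^m\le\sum_{m=N}^{\infty}|a_m|r^m$ and $\sum_{m=N}^{\infty}|d_m|r^m\le\sum_{m=N}^{\infty}|b_m|r^m$.

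Combining these with $1+k=2K/(K+1)$ gives, for $|z|=r\le 1/3$,
\[
|f(z^n)|+\sum_{m=N}^{\infty}(|c_m|+|d_m|)r^m\;\le\;\hat{f_0}(r^n)+\frac{2K}{K+1}\bigl(\hat{f_0}(r)-S_N(\hat{f_0})\bigr).
\]
I would then study
\[
\Phi(r):=(K+1)\bigl(\hat{f_0}(r^n)+f_0(-1)\bigr)+2K\bigl(\hat{f_0}(r)-S_N(\hat{f_0})\bigr),\qquad 0\le r\le 1,
\]
observing, just as for $T(r)$ in Theorem~\ref{K-quasi3}, that $\Phi$ is continuous, $\Phi'(r)\ge 0$ because $\hat{f_0}$ has nonnegative coefficients, $\Phi(0)=(K+1)f_0(-1)<0$, and $\Phi(1^-)>0$; hence $\Phi$ has a unique zero $r_0\in(0,1)$ and $\Phi(r)\le 0$ exactly for $r\le r_0$. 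Since $\Phi(r)\le 0$ is precisely the assertion that the right-hand side of the displayed inequality is at most $-f_0(-1)\le d(f(0),\partial f(\mathbb{D}))$, the Bohr--Rogosinski inequality holds for $|z|\le r^*=\min\{r_0,1/3\}$. For sharpness when $r^*=r_0$, taking $h=h_1=p$ with $p(z)=f_0(z)+k\overline{f_0(z)}$ and $\hat{f_0}=f_0$, and evaluating at the real point $z=r_0$, turns every inequality above into an equality and makes the left-hand side equal to $\hat{f_0}(r_0^{\,n})+\tfrac{2K}{K+1}(\hat{f_0}(r_0)-S_N(\hat{f_0}))=-f_0(-1)=d(p(0),\partial f_0(\mathbb{D}))$, by the defining equation $\Phi(r_0)=0$.

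The bulk of the argument is routine index bookkeeping; the one spot calling for care is the second estimate, where the identity $g'=k\phi f'$ must be matched to Lemma~\ref{quasi-serieslem} at a shifted index before integrating back to recover the tail of $\sum|b_m|r^m$ (the $N=1$ case reducing to the sum from index $0$, exactly as in the proof of Theorem~\ref{K-quasi3}). The only genuinely non-formal point, as in the theorem, is verifying $\Phi(1^-)>0$, which rests on $\hat{f_0}(1)+f_0(-1)>0$ and $\hat{f_0}(1)>S_N(\hat{f_0})\big|_{r=1}$; beyond this the statement is a direct extension of Theorem~\ref{K-quasi3}, which it recovers in the limit $N\to 1$, $n\to\infty$ (so that $S_N\to 0$ and $\hat{f_0}(r^n)\to 0$, leaving the condition $\tfrac{2K}{K+1}\hat{f_0}(r)=-f_0(-1)$).
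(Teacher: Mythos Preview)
Your proposal is correct and follows essentially the same route as the paper's own proof: bound $|f(z^n)|$ by $\hat{f_0}(r^n)$, combine Lemma~\ref{series-lem} on $f(z)/z\prec f_0(z)/z$ with Lemma~\ref{quasi-serieslem} on the dilatation identity to control the tail $\sum_{m=N}^\infty(|a_m|+|b_m|)r^m$ by $(1+k)(\hat{f_0}(r)-S_N(\hat{f_0}))$, reduce to the subordinate mapping $h_1$ via $f_1\prec f$, $g_1\prec g$, and then analyze the monotone auxiliary function to locate the unique root $r_0$. Your write-up is in fact more explicit than the paper's (which leaves the passage from $h$ to $h_1$ and the sharpness verification implicit), but the structure and ingredients are identical.
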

\begin{proof}
	Since $|f(z^n)| \leq \hat{f_0}(r^n)$ for $|z|=r$. Therefore, applying Lemma~\ref{series-lem} on \eqref{dilatation3}, and  Lemma~\ref{series-lem} on $f(z)/z \prec f_0(z)/z$, and then integrating from $0$ to $r$, we deduce that
	\begin{align*}
	& \quad |f(z^n)|+   \sum_{m=N}^{\infty}(|c_m|+|d_m|) r^m \\
	&\leq |\hat{f_0}(r^n)|+   (1+k)\sum_{m=N}^{\infty} |a_m| r^m \\
	&\leq  -f_0(-1) \leq d(f(0), \partial{f(\mathbb{D})})
	\end{align*}
	holds for $|z|=r\leq \{r_0,1/3 \}$, where $r_0\in(0,1)$ is the unique positive root of the equation 
	\begin{equation*}
	T_N(r) := \hat{f_0}(r^n)+f_0(-1)+(1+k) \sum_{m=N}^{\infty}|a_m|r^m=0.
	\end{equation*}
	Existence of the root $r_0$ follows, since $T_N(r)$ is continuous function of $r$ and $T'_N(r)\geq0$ with $T_N(0)=f_0(-1)$ and $T_N(1)>0$. \qed
\end{proof}

\begin{corollary}\cite[Theorem~5.1]{gangania-bohr}\cite[Theorem~3.1]{hamada-2021}
	Let $f\in \mathcal{S}^*(\psi)$ and $f_{1}(z)=\sum_{m=1}^{\infty}c_m z^m \in S_{\psi}(f)$. Then
	\begin{equation*}
	\sum_{m=1}^{\infty}|c_m| r^m \leq d(f(0), \partial{f(\mathbb{D})})
	\end{equation*}
	holds for $|z|\leq r^*=\min\{r_0, \frac{1}{3} \}$, where $r_0$ is the unique root in $(0,1)$ of 
	\begin{equation*}
	\hat{f}_0(r)=-f_0(-1).
	\end{equation*}
	The result is sharp for the function $\hat{f_0}=f_0$ as defined in \eqref{extremals3} when $r^*=r_0$.
\end{corollary}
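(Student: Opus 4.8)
The plan is to observe that this statement is exactly the case $k=0$ of Theorem~\ref{K-quasi3} (equivalently $K=1$, so that $g\equiv 0$ and the co-analytic part disappears), and to rerun the same argument with the bookkeeping lightened accordingly. First I would record the Ma--Minda subordination fact that $f\in\mathcal{S}^*(\psi)$ forces $f(z)/z\prec f_0(z)/z$, where $f_0$ is the extremal function from \eqref{extremals3} with $n=0$; this is what licenses comparing the Taylor coefficients of $f$ against those of $f_0$. Since the coefficients of $\hat f_0=f_0$ are nonnegative, Lemma~\ref{series-lem} applied with $N=1$ to $f(z)/z\prec f_0(z)/z$ gives $\sum_{m=1}^{\infty}|a_m|r^m\le\hat f_0(r)$ for $|z|=r\le 1/3$.

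Next, because $f_1\in S_\psi(f)$ means $f_1\prec f$, a second application of Lemma~\ref{series-lem} (again with $N=1$) yields $\sum_{m=1}^{\infty}|c_m|r^m\le\sum_{m=1}^{\infty}|a_m|r^m$ for $r\le 1/3$, so that $\sum_{m=1}^{\infty}|c_m|r^m\le\hat f_0(r)$ on $|z|=r\le 1/3$. The remaining ingredient is the lower growth bound for the Ma--Minda class, $|f(z)|\ge -f_0(-|z|)$ for all $z\in\mathbb{D}$, whence letting $|z|\to 1$ gives $d(f(0),\partial f(\mathbb{D}))=d(0,\partial f(\mathbb{D}))\ge -f_0(-1)>0$. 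Then I would set $T(r):=\hat f_0(r)+f_0(-1)$, note that it is continuous and nondecreasing with $T(0)=f_0(-1)<0$ and $T(1)>0$, conclude that it has a unique root $r_0\in(0,1)$, and chain the estimates: for $|z|=r\le\min\{r_0,1/3\}$ one has $\sum_{m=1}^{\infty}|c_m|r^m\le\hat f_0(r)\le -f_0(-1)\le d(f(0),\partial f(\mathbb{D}))$.

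For sharpness I would take $f=f_1=\hat f_0=f_0$; then the $c_m$ are precisely the (positive) Taylor coefficients of $f_0$, so the left-hand side equals $\hat f_0(r)$, while for this extremal function $d(f_0(0),\partial f_0(\mathbb{D}))=-f_0(-1)$, so equality holds at $r=r_0$ when $r^*=r_0$. The points that need the most care are the two structural facts about $\mathcal{S}^*(\psi)$ invoked above --- the subordination $f(z)/z\prec f_0(z)/z$ and the lower growth estimate $|f(z)|\ge -f_0(-|z|)$ --- together with checking $T(1)>0$, i.e.\ $\hat f_0(1^-)>-f_0(-1)$, so that a root in $(0,1)$ genuinely exists; everything else is the routine computation already performed in the proof of Theorem~\ref{K-quasi3}.
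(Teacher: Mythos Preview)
Your proposal is correct and matches the paper's approach: the corollary is obtained from Theorem~\ref{K-quasi3} by setting $K=1$ (equivalently $k=0$), so that the co-analytic part vanishes and the defining equation $\tfrac{2K}{K+1}\hat f_0(r)=-f_0(-1)$ reduces to $\hat f_0(r)=-f_0(-1)$. Your rerun of the argument with the simplified bookkeeping is exactly the specialization the paper intends.
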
	

\begin{corollary}\label{starlike3}
	Let $f \in \mathcal{S}^*$ in Theorem~\ref{K-quasi3}. Then the inequality \eqref{bohrcombi3.1} holds for $|z|=r\leq r_0<1/3$, where
	\begin{equation*}
	r_0= (5K+1-\sqrt{8K(3K+1)})/(K+1).
	\end{equation*} 
	The radius is sharp.
\end{corollary}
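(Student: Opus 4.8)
The plan is simply to run Theorem~\ref{K-quasi3} with the Ma--Minda datum $\psi(z)=(1+z)/(1-z)$, for which $\mathcal{S}^*(\psi)=\mathcal{S}^*$, and to make the quantities $\hat{f}_0$ and $f_0(-1)$ occurring there completely explicit. First I would solve \eqref{extremals3} in the case $n=0$: the relation $zf_0'(z)/f_0(z)=(1+z)/(1-z)$ integrates, under the normalization $f_0\in\mathcal{A}$, to the Koebe function
\[
f_0(z)=\frac{z}{(1-z)^2}=\sum_{m=1}^{\infty}m\,z^m .
\]
All its Taylor coefficients are positive, so $\hat{f}_0=f_0$; consequently $\hat{f}_0(r)=r/(1-r)^2$ and $f_0(-1)=-1/4$, whence $-f_0(-1)=1/4$, which is exactly the Koebe one-quarter bound underlying the right-hand side of \eqref{bohrcombi3.1} (recall $f(0)=0$).

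Next I would substitute these data into the defining equation of $r_0$ in Theorem~\ref{K-quasi3}, namely $\frac{2K}{K+1}\hat{f}_0(r)=-f_0(-1)$, i.e.
\[
\frac{2K}{K+1}\cdot\frac{r}{(1-r)^2}=\frac14 .
\]
Clearing denominators and using $2+8K/(K+1)=2(5K+1)/(K+1)$ turns this into the quadratic
\[
r^2-\frac{2(5K+1)}{K+1}\,r+1=0 ,
\]
whose discriminant simplifies through the identity $(5K+1)^2-(K+1)^2=8K(3K+1)$. Hence the two roots are $\bigl(5K+1\pm\sqrt{8K(3K+1)}\bigr)/(K+1)$; their product equals $1$ and their sum is positive, so exactly one of them lies in $(0,1)$, namely the one with the minus sign, which is therefore the unique root in $(0,1)$ of the increasing function $T$ from the proof of Theorem~\ref{K-quasi3}. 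This gives $r_0=(5K+1-\sqrt{8K(3K+1)})/(K+1)$.

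It then remains to see that $r^*=\min\{r_0,1/3\}=r_0$, i.e. that $r_0<1/3$. Since $T$ is increasing it suffices to check $T(1/3)>0$; but $\hat{f}_0(1/3)=(1/3)/(2/3)^2=3/4$, so $T(1/3)=\frac{3K}{2(K+1)}-\frac14\ge\frac12>0$ for every $K\ge1$ (the map $K\mapsto 3K/(2(K+1))$ being increasing with value $3/4$ at $K=1$), which forces $r_0<1/3$. Sharpness is inherited verbatim from Theorem~\ref{K-quasi3} with $\hat{f}_0=f_0$: equality in \eqref{bohrcombi3.1} at $|z|=r_0$ is attained by $p(z)=f_0(z)+k\,\overline{f_0(z)}=z/(1-z)^2+k\,\overline{z/(1-z)^2}$, which is precisely the extremal function of Theorem~\ref{Bhowmik-Thm1}. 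There is no genuine obstacle here: the corollary reduces to solving one quadratic and verifying one elementary inequality, the only points needing a little care being the selection of the root in $(0,1)$ and the check $r_0<1/3$.
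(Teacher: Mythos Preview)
Your proposal is correct and is exactly the intended specialization of Theorem~\ref{K-quasi3} to $\psi(z)=(1+z)/(1-z)$; the paper gives no separate proof of this corollary, so your explicit computation of $f_0$, the quadratic for $r_0$, and the verification $r_0<1/3$ fill in precisely the details the paper leaves to the reader. The sharpness via $p(z)=z/(1-z)^2+k\,\overline{z/(1-z)^2}$ also matches both Theorem~\ref{K-quasi3} and Theorem~\ref{Bhowmik-Thm1}.
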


\begin{corollary}
	Let $\psi(z)=\frac{1+Dz}{1+Ez}$ in Theorem~\ref{K-quasi3}, where $-1\leq E< D\leq 1$. Then the inequality \eqref{bohrcombi3.1} holds for $|z|=r\leq \min\{r_0,1/3\}$, where $r_0$ is the unique root of the equation
	\begin{equation*}
	\frac{2K}{K+1}(r+\sum_{m=2}^{\infty}\prod_{t=0}^{m-2}\frac{|E-D+Et|}{t+1}r^m )-(1-E)^{\frac{D-E}{E}}=0.
	\end{equation*}
	Further assume that $f_0=\hat{f_0}$ as defined in \eqref{extremals3}, and
	\begin{enumerate}[$(i)$]
		\item If $E\neq0$ and $3(1-E)^{\frac{D-E}{E}} \leq (1+\frac{E}{3})^{\frac{D-E}{E}}$;
		\item If $E=0$ and $D\geq \frac{3}{4}\log{3}$.
	\end{enumerate}
	Then the radius $r_0$ is sharp.
	
\end{corollary}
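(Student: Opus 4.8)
The plan is to read this corollary off from Theorem~\ref{K-quasi3} as a pure computation: for $\psi(z)=(1+Dz)/(1+Ez)$ one only needs the extremal $f_0$ of \eqref{extremals3}, its positive-coefficient companion $\hat f_0$, and the number $-f_0(-1)$ (which the growth theorem for $\mathcal S^{*}(\psi)$ gives as a lower bound for $d(f(0),\partial f(\mathbb D))$, with equality at the extremal). Then the displayed equation is nothing but the root equation $\frac{2K}{K+1}\hat f_0(r)=-f_0(-1)$ of Theorem~\ref{K-quasi3} made explicit, and the sharpness claim is the single inequality $r_0\le 1/3$ in disguise.

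First I would integrate \eqref{extremals3}. Partial fractions give $f_0'/f_0=1/z+(D-E)/(1+Ez)$, so with the normalisation $f_0(z)/z\to 1$ one obtains $f_0(z)=z(1+Ez)^{(D-E)/E}$ when $E\ne 0$, and $f_0(z)=z e^{Dz}$ when $E=0$ (the latter being the $E\to 0$ limit of the former, since $(1-E)^{(D-E)/E}\to e^{-D}$). Hence $-f_0(-1)=(1-E)^{(D-E)/E}>0$ for $E\ne 0$ and $-f_0(-1)=e^{-D}$ for $E=0$, because $1-E>0$ (as $E<D\le 1$).

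Next, putting $\alpha:=(D-E)/E$ and expanding $(1+Ez)^{\alpha}=\sum_{n\ge 0}\binom{\alpha}{n}E^n z^n$ by the binomial series (valid for $|Ez|<1$, so certainly on $|z|\le 1/3$), one gets $f_0(z)=\sum_{m\ge 1}a_m z^m$ with $a_m=\binom{\alpha}{m-1}E^{m-1}=\prod_{t=0}^{m-2}\frac{(\alpha-t)E}{t+1}$. The elementary identity $(\alpha-t)E=D-E-tE=-(E-D+Et)$ then yields $|a_m|=\prod_{t=0}^{m-2}\frac{|E-D+Et|}{t+1}$, i.e. $\hat f_0(r)=r+\sum_{m\ge 2}\prod_{t=0}^{m-2}\frac{|E-D+Et|}{t+1}\,r^m$. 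Substituting this and $-f_0(-1)$ into the root equation of Theorem~\ref{K-quasi3} gives precisely the displayed equation, and Theorem~\ref{K-quasi3} then delivers \eqref{bohrcombi3.1} for $|z|\le\min\{r_0,1/3\}$; the $E=0$ assertion follows either by the same limiting procedure or directly from $f_0(z)=ze^{Dz}$, where $\hat f_0(r)=\sum_{m\ge1}\frac{D^{m-1}}{(m-1)!}r^m=re^{Dr}$.

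For sharpness, Theorem~\ref{K-quasi3} gives equality for $p(z)=f_0(z)+k\overline{f_0(z)}$ exactly when $r^{*}=r_0$, that is, when $r_0\le 1/3$. Writing $T(r)=\frac{2K}{K+1}\hat f_0(r)+f_0(-1)$, which is nondecreasing with $T(0)=f_0(-1)<0$ and has a unique root $r_0\in(0,1)$ by Theorem~\ref{K-quasi3}, one has $r_0\le 1/3\iff T(1/3)\ge 0$; since $\frac{2K}{K+1}=1+k\ge 1$ for every $K\ge 1$, it suffices that $\hat f_0(1/3)\ge -f_0(-1)$. Under the hypothesis $f_0=\hat f_0$ (automatic when $E<0$, since then each factor $D-E-tE=(D-E)+t|E|$ is positive, and also possible for $E>0$, e.g. when $\alpha\in\{0,1,2,\dots\}$, where $f_0$ is a polynomial with positive coefficients) we may evaluate $\hat f_0(1/3)=f_0(1/3)=\tfrac13(1+E/3)^{(D-E)/E}$, so the condition becomes $\tfrac13(1+E/3)^{(D-E)/E}\ge (1-E)^{(D-E)/E}$, i.e. $3(1-E)^{(D-E)/E}\le (1+E/3)^{(D-E)/E}$, which is $(i)$; for $E=0$ it becomes $\tfrac13 e^{D/3}\ge e^{-D}$, i.e. $e^{4D/3}\ge 3$, i.e. $D\ge \tfrac34\log 3$, which is $(ii)$. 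The only step that is not bookkeeping is this last one: the threshold $r_0\le 1/3$ must be imposed \emph{uniformly in} $K$, which is why $2K/(K+1)$ is replaced by its infimum $1$ before evaluating at $r=1/3$, and this is exactly what turns the $K$-dependent inequality $T(1/3)\ge 0$ into the $K$-free conditions $(i)$ and $(ii)$. The remaining ingredients — the ODE, the binomial expansion, the sign-tracking that produces the absolute values, and the $E\to 0$ limit — are routine.
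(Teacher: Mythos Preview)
Your proposal is correct and is exactly the computation the paper leaves implicit: the corollary is stated without proof, being a direct specialisation of Theorem~\ref{K-quasi3}, and you have carried out precisely that specialisation --- integrating \eqref{extremals3} for $\psi(z)=(1+Dz)/(1+Ez)$ to get $f_0(z)=z(1+Ez)^{(D-E)/E}$ (resp.\ $ze^{Dz}$), reading off $-f_0(-1)$ and the Taylor coefficients to identify $\hat f_0(r)$ with the stated series, and then recognising that the sharpness conditions $(i)$--$(ii)$ are just the $K$-free inequality $\hat f_0(1/3)\ge -f_0(-1)$ obtained by replacing $2K/(K+1)$ with its infimum $1$ in $T(1/3)\ge 0$. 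Nothing is missing and the route matches the paper's.
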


\begin{corollary}\label{starlikeorderalpha3}
	Let $\psi(z)=\frac{1+(1-2\alpha)z}{1-z}$ in Theorem~\ref{K-quasi3}, where $0\leq \alpha \leq 1/2$.
	Then the inequality \eqref{bohrcombi3.1} holds for $|z|=r\leq r_0<1/3$, where $r_0$ is the unique root of the equation
	\begin{equation*}
	K 2^{2(1-\alpha)+1}r-(K+1)(1-r)^{2(1-\alpha)}=0.
	\end{equation*}
	The radius is sharp. 
\end{corollary}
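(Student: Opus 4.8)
The plan is to read off Corollary~\ref{starlikeorderalpha3} as the special case of Theorem~\ref{K-quasi3} corresponding to the Ma--Minda function $\psi(z)=\frac{1+(1-2\alpha)z}{1-z}$, for which $\mathcal{S}^*(\psi)=\mathcal{S}^*(\alpha)$ is the classical family of starlike functions of order $\alpha$. All the real work is to (a) compute the extremal function $f_0$ from \eqref{extremals3} and the two quantities $\hat f_0(r)$ and $f_0(-1)$ that enter Theorem~\ref{K-quasi3}, and (b) check that the resulting root $r_0$ satisfies $r_0\le \tfrac13$, so that $r^*=\min\{r_0,\tfrac13\}=r_0$ and the sharpness clause of Theorem~\ref{K-quasi3} is in force.

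First I would solve the defining equation \eqref{extremals3} with $n=0$. From $\frac{zf_0'(z)}{f_0(z)}=\frac{1+(1-2\alpha)z}{1-z}$ and the partial-fraction decomposition $\frac{1+(1-2\alpha)z}{z(1-z)}=\frac1z+\frac{2(1-\alpha)}{1-z}$, integrating and using the normalization $f_0(z)=z+\cdots$ gives
$$f_0(z)=\frac{z}{(1-z)^{2(1-\alpha)}}.$$
Since $2(1-\alpha)>0$, the binomial series of $(1-z)^{-2(1-\alpha)}$ has only positive coefficients, so $\hat f_0=f_0$, $\hat f_0(r)=\dfrac{r}{(1-r)^{2(1-\alpha)}}$ for $0\le r<1$, and $f_0(-1)=-2^{-2(1-\alpha)}$, i.e. $-f_0(-1)=2^{-2(1-\alpha)}$.

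Next I would substitute these into the equation $\frac{2K}{K+1}\hat f_0(r)=-f_0(-1)$ of Theorem~\ref{K-quasi3}, namely
$$\frac{2K}{K+1}\,\frac{r}{(1-r)^{2(1-\alpha)}}=\frac{1}{2^{2(1-\alpha)}},$$
and clear denominators to reach $K\,2^{2(1-\alpha)+1}r-(K+1)(1-r)^{2(1-\alpha)}=0$, which is exactly the equation in the statement; its unique root in $(0,1)$ is $r_0$ by Theorem~\ref{K-quasi3}. To see $r_0\le\tfrac13$, put $\beta:=2(1-\alpha)\in[1,2]$ and consider $G(r):=K\,2^{\beta+1}r-(K+1)(1-r)^{\beta}$, which is increasing on $[0,1]$. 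Then $G(\tfrac13)=2^{\beta}\big(\tfrac{2K}{3}-\tfrac{K+1}{3^{\beta}}\big)\ge 0$, because $2K\cdot 3^{\beta-1}\ge 2K\ge K+1$ (using $\beta\ge1$ and $K\ge1$), with strict inequality unless simultaneously $\beta=1$ and $K=1$; hence $r_0\le\tfrac13$ (indeed $r_0<\tfrac13$ off that degenerate case), so $r^*=r_0$. Sharpness is then immediate from Theorem~\ref{K-quasi3}: since $\hat f_0=f_0$ here, the extremal is $p(z)=f_0(z)+k\,\overline{f_0(z)}=\dfrac{z}{(1-z)^{2(1-\alpha)}}+k\,\overline{\Big(\dfrac{z}{(1-z)^{2(1-\alpha)}}\Big)}$.

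There is no genuine obstacle; the argument is a direct specialization of Theorem~\ref{K-quasi3}. The only step requiring an actual (short) computation is the inequality $r_0\le\tfrac13$, which is precisely what ensures that the constant $\tfrac13$ coming from Lemma~\ref{series-lem} is not the binding constraint and that the sharp extremal of the parent theorem applies.
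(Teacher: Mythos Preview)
Your proposal is correct and follows exactly the route the paper intends: the corollary is stated without proof precisely because it is the direct specialization of Theorem~\ref{K-quasi3} to $\psi(z)=\frac{1+(1-2\alpha)z}{1-z}$, and your computations of $f_0(z)=z/(1-z)^{2(1-\alpha)}$, $\hat f_0=f_0$, $-f_0(-1)=2^{-2(1-\alpha)}$, and the verification that $r_0\le\tfrac13$ (so $r^*=r_0$ and the sharpness clause applies) are exactly the missing details. Your observation that equality $r_0=\tfrac13$ occurs only at the corner $\alpha=\tfrac12$, $K=1$ is a nice refinement of the paper's strict inequality.
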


\begin{remark}\label{inclusion3.1}
	Substituting $\alpha =0$ in Corollary~\ref{starlikeorderalpha3}, we retrace Corollary~\ref{starlike3}. One should also note that if we take $K=1$ and $\alpha=0$ in Corollary~\ref{starlike3}, then the radius $r_0=3-2\sqrt{2}$, which is equal to the Bohr radius for the class $\mathcal{S}^*$. Moreover, thinking in the direction of inclusion relationship  for a class $\mathcal{M}$ of starlike functions such that $\mathcal{M}\subset\mathcal{S}^*(\alpha)$ for some $\alpha$, then we observe that Corollary~\ref{starlikeorderalpha3} provides a lower bound for the radius $r_0$ in Theorem~\ref{K-quasi3} for the case of  $\mathcal{M}$ in place of $\mathcal{S}^*(\psi)$ in general.
\end{remark}	

Now recall that a function $f\in \mathcal{S}$ which has the property that for every circular arc $\Gamma$ contained in $\mathbb{D}$ with center $\xi\in \mathbb{D}$, the image arc $f(\Gamma)$ is a starlike arc with respect to $f(\xi)$, is called Goodman uniformly starlike ($\equiv \mathcal{UST}$). Similary the class of uniformly convex function ($\equiv \mathcal{UCV}$) is also defined. Motivated by this class recently Darus introduced the class:

Let $f\in \mathcal{A}$. Then $f\in k-\mathcal{UCST}(\alpha)$ if and only if
\begin{equation*}
\Re\bigg\{\frac{(zf'(z))'}{f(z)}  \bigg\} > k\bigg| (1-\alpha)\frac{zf'(z)}{f(z)}+\alpha \frac{(zf'(z))'}{f(z)}-1 \bigg|,
\end{equation*}
$k\geq0$, $0\leq \alpha\leq1$. Now following the Remark~\ref{inclusion3.1} and the inclusion relation given in \cite[Theorem~2.1]{Darus-2018}, Corollary~\ref{starlikeorderalpha3} gives:

\begin{example}
	Let us replace $\mathcal{S}^*(\psi)$ by $k-\mathcal{UCST}(\alpha)$ in Theorem~\ref{K-quasi3}.
	Then the inequality \eqref{bohrcombi3.1} holds for $|z|=r\leq r_0<1/3$, where $r_0$ is the unique root of the equation
	\begin{equation*}
	\frac{2Kr}{(1-r)^{2(1-\delta)}}-\frac{K+1}{4^{(1-\delta)}}=0,
	\end{equation*}
	where 
	\begin{equation*} 
	\delta=\frac{(2\gamma-\beta)+\sqrt{(2\gamma-\beta)^2 +8\beta}}{4}
	\end{equation*}
	and $0\leq\delta<1$, $\beta=\frac{1+\alpha k}{1+k}$ and $\gamma=\frac{1}{1+k}$ and further satisfy
	\begin{equation*}
	(1-\alpha)k^2-(1+\alpha)k-2\geq0.
	\end{equation*}
	In view of the Remark~\ref{inclusion3.1}, note that  $k-\mathcal{UCST}(\alpha) \subset \mathcal{S}^*(\delta)$, and therefore the radius $r_0$ can be further improve.
\end{example}

Now we conclude this section with the following result which is convex analogue of Theorem~\ref{K-quasi3}:

\begin{theorem}\label{K-quasi3-conv}
	Let $h(z)=f(z)+ \overline{g(z)}=z+\sum_{m=2}^{\infty}a_m z^m+ \overline{\sum_{m=1}^{\infty}b_m {z}^m} $ be a sense-preserving $K$-quasiconformal harmonic mapping defined in $\mathbb{D}$ such that $f\in \mathcal{C}(\psi)$ and $h_{1}(z)=f_{1}(z)+ \overline{g_{1}(z)}=z+\sum_{m=2}^{\infty}c_m z^m+ \overline{\sum_{m=1}^{\infty}d_m {z}^m} \in S_{\psi}(h)$. Then
	\begin{equation*}
	\sum_{m=1}^{\infty}(|c_m|+|d_m|) r^m \leq d(f(0), \partial{f(\mathbb{D})})
	\end{equation*}
	holds for $|z|\leq r^*=\min\{r_0, \frac{1}{3} \}$, where $r_0$ is the unique root in $(0,1)$ of 
	\begin{equation*}
	\frac{2K}{K+1}\hat{f}_0(r)=-f_0(-1).
	\end{equation*}
	The result is sharp for the function $p(z)=f_0(z)+k \overline{f_0(z)}$ when $r^*=r_0$, where $\hat{f_0}=f_0$ is the solution of $$1+\frac{zf''(z)}{f'(z)}=\psi(z).$$
\end{theorem}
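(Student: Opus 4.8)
The plan is to run the proof of Theorem~\ref{K-quasi3} essentially line by line, the only new ingredient being a subordination for $f'$ that plays the role of $f(z)/z\prec f_0(z)/z$ used there. Since $f\in\mathcal{C}(\psi)$, the quantity $1+zf''(z)/f'(z)$ is subordinate to $\psi$ and hence has positive real part, so $f$ is convex univalent and $f'(z)\neq0$ on $\mathbb{D}$. Thus $w_h=g'/f'$ is an analytic self-map of $\mathbb{D}$ with $|w_h|\le k=(K-1)/(K+1)$; taking $w_h$ non-constant (the constant case being treated as in Theorem~\ref{K-quasi3}), the maximum modulus principle yields $\phi:\mathbb{D}\to\mathbb{D}$ with $g'(z)=k\phi(z)f'(z)$. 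Applying Lemma~\ref{quasi-serieslem} with $\tau=N=1$, $M=k$ to this identity and integrating from $0$ to $r$ gives $\sum_{m\ge1}|b_m|r^m\le k\sum_{m\ge1}|a_m|r^m$ for $|z|=r\le1/3$, exactly as in \eqref{coeffcomparision3}.

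The decisive step is the bound $\sum_{m\ge1}|a_m|r^m\le\hat f_0(r)$ for $r\le1/3$. Here I would use the Alexander correspondence: $f\in\mathcal{C}(\psi)$ if and only if $zf'(z)\in\mathcal{S}^*(\psi)$, and the associated starlike extremal is precisely $zf_0'(z)$, where $f_0$ solves $1+zf_0''/f_0'=\psi$. Hence the starlike subordination invoked in Theorem~\ref{K-quasi3}, applied to $zf'\in\mathcal{S}^*(\psi)$, yields $f'(z)\prec f_0'(z)$. Applying Lemma~\ref{series-lem} with $N=1$ to $f'\prec f_0'$ and integrating from $0$ to $r$ then gives $\sum_{m\ge2}|a_m|r^m\le\sum_{m\ge2}|a_m^{(0)}|r^m$, and adding the (equal) first terms gives $\sum_{m\ge1}|a_m|r^m\le\sum_{m\ge1}|a_m^{(0)}|r^m=\hat f_0(r)$ for $r\le1/3$, where $a_m^{(0)}$ are the coefficients of $f_0$.

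Combining the two estimates, $\sum_{m\ge1}(|a_m|+|b_m|)r^m\le(1+k)\hat f_0(r)=\frac{2K}{K+1}\hat f_0(r)$ for $r\le1/3$. As in Theorem~\ref{K-quasi3}, $T(r):=\frac{2K}{K+1}\hat f_0(r)+f_0(-1)$ is continuous and non-decreasing on $[0,1]$ with $T(0)=f_0(-1)<0$ and $T(1)>0$, so it has a unique zero $r_0\in(0,1)$; together with the covering estimate $d(f(0),\partial f(\mathbb{D}))\ge-f_0(-1)$ for the Ma-Minda convex class, this gives $\sum_{m\ge1}(|a_m|+|b_m|)r^m\le d(f(0),\partial f(\mathbb{D}))$ for $|z|\le\min\{r_0,1/3\}$. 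Then $h_1\prec h$ forces $f_1\prec f$ and $g_1\prec g$ (cf. \cite{schaubroeck-2000}), so Lemma~\ref{series-lem} gives $\sum|c_m|r^m\le\sum|a_m|r^m$ and $\sum|d_m|r^m\le\sum|b_m|r^m$ for $r\le1/3$, which chains onto the previous bound and proves \eqref{bohrcombi3.1} for $|z|\le r^*=\min\{r_0,1/3\}$. Sharpness when $r^*=r_0$ is checked by substituting $p(z)=f_0(z)+k\overline{f_0(z)}$ (so $\hat f_0=f_0$): then $c_m=a_m^{(0)}$, $d_m=ka_m^{(0)}$, and $\sum(|c_m|+|d_m|)r_0^m=(1+k)\hat f_0(r_0)=-f_0(-1)=d(f_0(0),\partial f_0(\mathbb{D}))$.

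The step I expect to be the main obstacle is the justification of $f'(z)\prec f_0'(z)$: the Alexander reduction still rests on the (non-elementary) Ma-Minda subordination $F(z)/z\prec F_0(z)/z$ for $F\in\mathcal{S}^*(\psi)$ tacitly used in Theorem~\ref{K-quasi3}; equivalently, one must show that $\log f'(z)=\int_0^z(\psi(w(t))-1)t^{-1}\,dt$ (for a Schwarz function $w$) is subordinate to $\log f_0'(z)=\int_0^z(\psi(t)-1)t^{-1}\,dt$, and this requires care about convexity of the majorant. A secondary point, needed only for sharpness, is upgrading the covering estimate to the equality $d(f(0),\partial f(\mathbb{D}))=-f_0(-1)$ on the extremal.
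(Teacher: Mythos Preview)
Your proposal is correct and is exactly the paper's approach: the paper's proof consists of the single sentence ``It follows from Theorem~\ref{K-quasi3} by suitably using Alexander's relation between starlike and convex functions,'' and you have simply spelled out that reduction in full detail. The concerns you flag at the end (the Ma--Minda subordination $F(z)/z\prec F_0(z)/z$ for $F\in\mathcal{S}^*(\psi)$, and the equality in the covering estimate on the extremal) are both standard results from \cite{minda94} and are already implicitly used in Theorem~\ref{K-quasi3}, so they pose no genuine obstacle.
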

\begin{proof}
	It follows from Theorem~\ref{K-quasi3} by suitably using Alexander's relation between starlike and convex functions. \qed
\end{proof}	

\begin{corollary}
	Let $f \in \mathcal{C}$ in Theorem~\ref{K-quasi3-conv}. Then the inequality \eqref{bohrcombi3.1} holds for $|z|=r\leq r_0<1/3$, where
	\begin{equation*}
	r_0= (K+1)/(5K+1).
	\end{equation*} 
	The radius is sharp.
\end{corollary}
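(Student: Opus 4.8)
The plan is to specialize Theorem~\ref{K-quasi3-conv} to $\psi(z)=(1+z)/(1-z)$, for which $\mathcal{C}(\psi)$ is exactly the classical class $\mathcal{C}$ of convex univalent functions, and then to make the data $\hat{f}_0$, $f_0(-1)$, and the defining equation for $r_0$ fully explicit.

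First I would pin down $f_0$. Solving $1+zf_0''(z)/f_0'(z)=(1+z)/(1-z)$ under the normalization $f_0(0)=0$, $f_0'(0)=1$ gives $zf_0''/f_0'=2z/(1-z)$, hence $f_0'(z)=(1-z)^{-2}$ and $f_0(z)=z/(1-z)$. Since $f_0(z)=\sum_{m\ge 1}z^m$ has all coefficients equal to $1>0$, we have $\hat{f}_0=f_0$ and $\hat{f}_0(r)=r/(1-r)$; moreover $f_0(\mathbb{D})$ is the half-plane $\{w:\Re w>-1/2\}$, so that $d(f_0(0),\partial f_0(\mathbb{D}))=1/2=-f_0(-1)$, which matches the right-hand side of the equation in Theorem~\ref{K-quasi3-conv}.

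Next I would substitute into $\frac{2K}{K+1}\hat{f}_0(r)=-f_0(-1)$, obtaining $\frac{2K}{K+1}\cdot\frac{r}{1-r}=\frac12$, i.e.\ $4Kr=(K+1)(1-r)$. This is linear in $r$ and has the unique solution $r_0=(K+1)/(5K+1)$, which indeed lies in $(0,1)$. A one-line check shows $(K+1)/(5K+1)\le 1/3$ for all $K\ge1$ (with equality exactly at $K=1$), so $r^*=\min\{r_0,1/3\}=r_0$, and the radius delivered by Theorem~\ref{K-quasi3-conv} is precisely $r_0$.

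For sharpness I would use the extremal map already identified in Theorem~\ref{K-quasi3-conv}, namely $p(z)=f_0(z)+k\overline{f_0(z)}=\frac{z}{1-z}+k\overline{\frac{z}{1-z}}$ with $k=(K-1)/(K+1)$; its dilatation is the constant $k$, so it is sense-preserving and $K$-quasiconformal, and its analytic part lies in $\mathcal{C}$. Taking $h_1=p$ gives $|c_m|=1$ and $|d_m|=k$ for every $m\ge1$, so the left side of \eqref{bohrcombi3.1} equals $(1+k)\,r/(1-r)=\frac{2K}{K+1}\cdot\frac{r}{1-r}$, which equals $1/2=d(f_0(0),\partial f_0(\mathbb{D}))$ exactly at $r=r_0$. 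I do not anticipate a genuine obstacle here: every step is elementary, and the only point needing a moment's care is the verification that $r_0\le 1/3$, which guarantees that the minimum defining $r^*$ is attained at $r_0$ and hence that the sharpness assertion of Theorem~\ref{K-quasi3-conv} applies verbatim.
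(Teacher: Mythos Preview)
Your proposal is correct and is exactly the intended derivation: the paper gives no explicit proof of this corollary, treating it as an immediate specialization of Theorem~\ref{K-quasi3-conv} with $\psi(z)=(1+z)/(1-z)$, and you have filled in precisely those details (identifying $f_0(z)=z/(1-z)$, solving the defining equation, and invoking the extremal $p(z)=f_0(z)+k\overline{f_0(z)}$). Your observation that $r_0=1/3$ when $K=1$ is in fact slightly sharper than the paper's stated strict inequality $r_0<1/3$.
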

The above corollary was obatined by Liu and Ponnusamy~\cite[Theorem~1]{Liu-Ponnu-2021}, (also see \cite[Theorems~1.1 and 1.3]{Kayumov-PonShakirov2018}), where the analytic part $f$ was not normalized.

\section{Bohr phenomenon for Logarithmic power series}

Let us recall that logarithmic coefficients $\gamma_m$ of the functions $f\in \mathcal{S}^*(\psi)$ are defined as
\begin{equation}\label{logcoef3}
\log \bigg(\frac{f(z)}{z} \bigg)= 2\sum_{m=1}^{\infty} \gamma_m z^m.
\end{equation}

In the following theorem, we generalize and provide a simple proof of the result \cite[Theorem~4, Sec~3]{Bhowmik-2019}. 
\begin{theorem}\label{logbohrstarlike}
	Let $\psi(z)=1+B_1 z+B_2 z^2+ \cdots$ and $f(z)=z+\sum_{m=2}^{\infty}a_m z^m \in \mathcal{S}^*(\psi) $, where $\psi(\mathbb{D})$ is convex. Then the logarithmic coefficient's of $f$ satisfies
	\begin{equation*}
	2\sum_{m=1}^{\infty} |\gamma_m| r^m \leq 1, 
	\quad \text{for} \quad |z|=r \leq  1-\frac{1}{\exp\big(\frac{1}{|B_1|}\big)}.
	\end{equation*}
	The result is sharp for the case when the function $f$ defined as $zf'(z)/f(z)=\psi(z)$ is an extremal for the logarithmic coefficient's bounds.
\end{theorem}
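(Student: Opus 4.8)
The plan is to bound each logarithmic coefficient $\gamma_m$ individually and then sum the resulting series. Starting from $\frac{d}{dz}\log(f(z)/z)=\frac1z\bigl(\frac{zf'(z)}{f(z)}-1\bigr)$ and using that $f\in\mathcal{S}^*(\psi)$ with $\psi$ univalent forces $zf'(z)/f(z)=\psi(w(z))$ for some Schwarz function $w$, I would write $\psi(w(z))-1=\sum_{k=1}^{\infty}c_k z^k$ (the expansion starts at $k=1$ since $\psi(0)=1$), integrate from $0$ to $z$, and compare with $\log(f(z)/z)=2\sum_{m=1}^{\infty}\gamma_m z^m$ from \eqref{logcoef3} to obtain $2\gamma_m=c_m/m$ for every $m\ge1$.

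The next, and only substantive, step is the estimate $|c_m|\le|B_1|$. Here the hypothesis that $\psi(\mathbb{D})$ is convex is exactly what is needed: together with univalence of $\psi$ it means $F(z):=\psi(z)-1$ is convex univalent with $F(0)=0$ and $F'(0)=B_1$ (and $B_1=\psi'(0)>0$). Since $\psi(w(z))-1=F(w(z))\prec F(z)$, Rogosinski's theorem on the coefficients of a function subordinate to a convex univalent function gives $|c_m|\le|F'(0)|=|B_1|$ for all $m$. Consequently $2|\gamma_m|=|c_m|/m\le|B_1|/m$, hence
\[
2\sum_{m=1}^{\infty}|\gamma_m|r^m\le|B_1|\sum_{m=1}^{\infty}\frac{r^m}{m}=-|B_1|\log(1-r),
\]
and $-|B_1|\log(1-r)\le1$ holds precisely for $1-r\ge\exp(-1/|B_1|)$, i.e.\ for $r\le1-1/\exp(1/|B_1|)$, which is the claimed radius.

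For sharpness I would take $w(z)=z$, i.e.\ the function $f_0$ defined by $zf_0'(z)/f_0(z)=\psi(z)$, for which $c_m=B_m$ and $2\gamma_m=B_m/m$; then all the inequalities above are equalities at the endpoint $r=1-\exp(-1/|B_1|)$ exactly when $|B_m|=|B_1|$ for every $m$, which is the situation in which $f_0$ realizes the bounds $|\gamma_m|\le|B_1|/(2m)$ (for instance $\psi(z)=(1+z)/(1-z)$, $f_0$ the Koebe function, giving $r=1-e^{-1/2}$). The main obstacle is precisely the coefficient bound $|c_m|\le|B_1|$: one must exploit the convexity of $\psi(\mathbb{D})$ and Rogosinski's estimate rather than the generic subordination bound $|c_m|\le m|B_1|$, which would yield only the much smaller radius obtained from $\sum|B_1|r^m$.
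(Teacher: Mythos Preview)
Your argument is correct and follows the same route as the paper: obtain the bound $|\gamma_m|\le |B_1|/(2m)$, sum to get $-|B_1|\log(1-r)$, and solve $-|B_1|\log(1-r)\le 1$. The only difference is cosmetic: the paper quotes the estimate $|\gamma_m|\le |B_1|/(2m)$ from \cite{choLog-2019}, whereas you re-derive it from scratch via $2m\gamma_m=c_m$ and Rogosinski's coefficient bound for functions subordinate to a convex univalent $\psi-1$; that is precisely the content of the cited result, so the two proofs coincide once the citation is unpacked.
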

\begin{proof}
	Recall the result \cite[Theorem~1, Sec~2]{choLog-2019} which says: If $f\in \mathcal{S}^*(\psi)  $, then $\gamma_m$ as defined in \eqref{logcoef3} satisfies
	\begin{equation}
	|\gamma_m| \leq \frac{|B_1|}{2m},
	\end{equation}
	whenever $\psi(\mathbb{D})$ is convex. Hence 
	\begin{align*}
	2\sum_{m=1}^{\infty} |\gamma_m|r^m \leq 2\sum_{m=1}^{\infty} \frac{|B_1|}{2m}r^m
	=|B_1|\sum_{m=1}^{\infty}\frac{1}{m}r^m.
	\end{align*}
	Since the sum of the power series $\sum_{m=1}^{\infty}\frac{x^m}{m}=-\log(1-x)$ for  $x\in (-1,1]$. Therefore, we have
	\begin{equation*}
	2\sum_{m=1}^{\infty}|\gamma_m|r^m \leq |B_1|\log\bigg( \frac{1}{1-r}\bigg) \leq 1,
	\end{equation*}
	which holds whenever $r\leq 1-{1}/{(\exp\big(\frac{1}{|B_1|}\big))}$. 
	\qed
	
\end{proof}	


Now we apply the above theorem to some well-known and recently introduced classes:
\begin{corollary}\label{all-result}
	Let $g\in \mathcal{A}$ and $f\in \mathcal{S}^*(\psi)$, where $\psi(\mathbb{D})$ is convex. Then
	\begin{equation*}
	2\sum_{m=1}^{\infty} |\gamma_m| r^m \leq 1, 
	\quad \text{for} \quad |z|=r \leq r_0=  1-\frac{1}{\exp\big(\frac{1}{|B_1|}\big)},
	\end{equation*}
	follows for each one of the following cases:
	\begin{itemize}
		\item [$(i)$]  	$B_1=D-E$ when $\psi(z)= \frac{1+Dz}{1+Ez}$, where $-1\leq E<D\leq1$.
		
		\item [$(ii)$]	$B_1=2(1-\alpha)$ when $\psi(z)=\frac{1+(1-2\alpha)z}{1-z}$, where $0\leq\alpha<1$.
		
		\item [$(iii)$] $B_1=2\eta$ when $\psi(z)=\left(\frac{1+z}{1-z}\right)^{\eta}$, where $0<\eta \leq1$.
		
		\item [$(iv)$] 	$B_1= (5\sqrt{2}-6)/(2\sqrt{2})$ when  $\psi(z)=\sqrt{2}-(\sqrt{2}-1)\sqrt{\frac{1-z}{1+2(\sqrt{2}-1)z}}$.
		
		\item [$(v)$] 	$B_1={b^{\frac{1}{a}}}/{a}$ when $\psi(z)=(b(1+z))^{1/a}$, where $a\geq1$ and $b\geq 1/2$.

		\item [$(vi)$] $B_1={1-\alpha}$ when $\psi(z)=\alpha+(1-\alpha)e^z$, where $0\leq\alpha<1$.

		\item [$(vii)$] $B_1={(1-\alpha)}/{2}$ when $\alpha+(1-\alpha) \sqrt{1+z}$, where $0\leq\alpha<1$.

		\item [$(vii)$] $B_1= 1/2$ when $\psi(z)=\frac{2}{1+e^{-z}}$.
		
		
	\end{itemize}
	The result is sharp for the case when the function $f$ defined as $zf'(z)/f(z)=\psi(z)$ is an extremal for the logarithmic coefficient's bounds.
\end{corollary}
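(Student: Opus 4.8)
The plan is straightforward: the Corollary is a list of specializations of Theorem~\ref{logbohrstarlike}, so the entire proof amounts to verifying, case by case, that each displayed function $\psi$ is univalent with $\psi(\mathbb{D})$ convex, normalized so that $\psi(0)=1$, $\psi'(0)>0$ and $\psi(\mathbb{D})$ symmetric about the real axis, and then reading off the first Taylor coefficient $B_1=\psi'(0)$. Once those hypotheses are checked, Theorem~\ref{logbohrstarlike} applies verbatim and yields the stated radius $r_0 = 1 - 1/\exp(1/|B_1|)$ together with the sharpness claim, with no further work.

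First I would record the general principle: for $\psi(z)=1+B_1 z+B_2 z^2+\cdots$, one has $B_1=\psi'(0)$, so in each case I simply differentiate at the origin. For $(i)$, $\psi(z)=(1+Dz)/(1+Ez)$ is a M\"obius map, hence maps $\mathbb{D}$ onto a disk or half-plane (convex), and $\psi'(0)=D-E>0$; cases $(ii)$ and $(vii)$ with the square root are instances of $(i)$ composed with a power, but more simply are known Janowski-type or lemniscate-type maps whose convexity is classical, and the derivative computation gives $B_1=2(1-\alpha)$, resp. $(1-\alpha)/2$. For $(iii)$, $\psi(z)=((1+z)/(1-z))^\eta$ with $0<\eta\le 1$ maps $\mathbb{D}$ onto a sector of half-opening $\eta\pi/2\le \pi/2$, which is convex, and $\psi'(0)=2\eta$. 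For $(v)$, $\psi(z)=(b(1+z))^{1/a}$ with $a\ge 1$ is a power of a half-plane map, convex for $1/a\le 1$, with $\psi'(0)=b^{1/a}/a$. For $(vi)$ and the exponential case, $\psi(z)=\alpha+(1-\alpha)e^z$: since $e^z$ maps $\mathbb{D}$ into a convex region (this is the standard fact that $e^z$ is convex in $\mathbb{D}$, as $1+z\psi''/\psi' = 1+z$ has positive real part), the affine image is convex, and $\psi'(0)=1-\alpha$; similarly $\psi(z)=2/(1+e^{-z})$, the sigmoid, is known to be convex univalent in $\mathbb{D}$ with $\psi'(0)=1/2$. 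The lemniscate-of-Bernoulli variant in $(iv)$ is the ``nephroid/Booth'' type function whose convexity and normalization are recorded in the literature, with the stated $B_1=(5\sqrt2-6)/(2\sqrt2)$ obtained by a direct expansion.

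The only genuine mathematical content, and hence the main obstacle, is verifying the convexity of $\psi(\mathbb{D})$ in the less standard cases — particularly $(iv)$ and the sigmoid $\psi(z)=2/(1+e^{-z})$ — since convexity of $\psi(\mathbb{D})$ (not merely of $\psi$ as a function of $z$) is exactly the hypothesis needed to invoke the logarithmic coefficient bound $|\gamma_m|\le |B_1|/(2m)$ from \cite[Theorem~1, Sec~2]{choLog-2019}. For these I would either cite the sources where each class was introduced (the references already in the bibliography for these model functions) or give the one-line check $\Re(1+z\psi''(z)/\psi'(z))>0$ where it is elementary, e.g. for $e^z$ and its affine images. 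For the Janowski and power cases, convexity of the image is a classical computation (the image is a disk, half-plane, sector, or a convex subset thereof), so I would simply assert it with a reference.

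Finally, the sharpness statement transfers immediately: Theorem~\ref{logbohrstarlike} asserts sharpness whenever the solution $f$ of $zf'(z)/f(z)=\psi(z)$ is an extremal for the logarithmic coefficients, and this is precisely the condition stated in the Corollary, so nothing further need be said. Thus I would present the proof as a short paragraph reducing everything to Theorem~\ref{logbohrstarlike}, followed by a compact table or enumerated list computing $B_1=\psi'(0)$ in each case and pointing to the relevant reference for the convexity of $\psi(\mathbb{D})$.
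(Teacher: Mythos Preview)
Your proposal is correct and matches the paper's approach exactly: the paper gives no separate proof for this corollary, introducing it only with the sentence ``Now we apply the above theorem to some well-known and recently introduced classes,'' so the intended argument is precisely the one you describe---invoke Theorem~\ref{logbohrstarlike} and read off $B_1=\psi'(0)$ in each case. Your plan is in fact more careful than the paper, since you explicitly flag the need to verify convexity of $\psi(\mathbb{D})$ (and the other Ma--Minda normalizations) for each listed $\psi$, whereas the paper simply takes these as known from the literature.
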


\begin{remark}
	In Theorem~\ref{logbohrstarlike}, let $f(z)=z+\sum_{m=2}^{\infty}a_m z^m \in \mathcal{S}^*(\psi) $, where $\psi(\mathbb{D})$ is starlike with respect to $1$ but convex. Then the logarithmic coefficients of $f$ satisfies, see \cite[Theorem~1, Sec~2]{choLog-2019},
	\begin{equation*}
	|\gamma_m| \leq \frac{|B_1|}{2}, \quad n\in \mathbb{N},
	\end{equation*}
	and following the Proof of Theorem~\ref{logbohrstarlike},  we deduce that
	\begin{equation*}
	2\sum_{m=1}^{\infty} |\gamma_m| r^m \leq 1 
	\quad \text{for} \quad |z|=r \leq (1+|B_1|)^{-1}.
	\end{equation*}
\end{remark}	

Note that all sharp logarithmic coefficient's bounds for the functions in the class $\mathcal{C}(\psi)$ of normalized convex function are not available till now, see \cite[Theorem~1, Sec~2]{choLog-2019}. But using the technique of differential subordination, we overcome this difficulty to achieve the Bohr radius for logarithmic power series for the Ma-Minda classes of convex functions in the following result.
\begin{theorem}\label{logbohrconvex}
	Let $\phi$ be convex in $\mathbb{D}$ and
	suppose $\psi$ be the convex solution of the differential equation
	\begin{equation}\label{briot-sol}
	\psi(z)+\frac{z\psi'(z)}{\psi(z)}=\phi(z).
	\end{equation}
	If $f\in \mathcal{C}(\phi)$ with $\phi(z)=1+B_1z+B_2z^2+\cdots$. Then the logarithmic coefficients of $f$ satisfies
	\begin{equation}\label{logBohrineq}
	2\sum_{m=1}^{\infty} |\gamma_m| r^m \leq 1, 
	\end{equation}
	for
	\begin{equation*}
	|z|=r \leq  1-\frac{1}{\exp\big(\frac{2}{|B_1|}\big)},
	\end{equation*}
	The result is sharp for the case when the function $f$ defined as $1+zf''(z)/f'(z)=\phi(z)$ is an extremal for the logarithmic coefficient's bounds.
	
\end{theorem}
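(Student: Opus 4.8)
The plan is to deduce this from Theorem~\ref{logbohrstarlike} by showing that the differential subordination defining $\mathcal{C}(\phi)$ forces $f\in\mathcal{S}^*(\psi)$ for the convex solution $\psi$ of \eqref{briot-sol}. Set $p(z)=zf'(z)/f(z)$; since $\RE\phi>0$, membership $f\in\mathcal{C}(\phi)$ makes $f$ convex, hence starlike, so $p$ is analytic with $p(0)=1$. A logarithmic differentiation of $p$ yields the identity
\begin{equation*}
p(z)+\frac{zp'(z)}{p(z)}=1+\frac{zf''(z)}{f'(z)},
\end{equation*}
so the hypothesis $f\in\mathcal{C}(\phi)$ is precisely $p+zp'/p\prec\phi$ with $p(0)=1$. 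At this point I would invoke the first-order (Briot--Bouquet) differential subordination theory of \cite{subbook}: because $\psi$ is, by assumption, the convex — in particular univalent, with $\RE\psi>0$ — solution of $\psi+z\psi'/\psi=\phi$, it is the best dominant of this subordination, whence $p\prec\psi$, i.e. $zf'(z)/f(z)\prec\psi(z)$ and $f\in\mathcal{S}^*(\psi)$.

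The remaining steps are bookkeeping. Writing $\psi(z)=1+c_1z+c_2z^2+\cdots$ and comparing the coefficient of $z$ on the two sides of \eqref{briot-sol} gives $2c_1=B_1$, i.e. $c_1=B_1/2$. Since $\psi(\mathbb{D})$ is convex, Theorem~\ref{logbohrstarlike} applies verbatim to $f\in\mathcal{S}^*(\psi)$ with the role of $B_1$ there played by $c_1=B_1/2$; its conclusion reads $2\sum_{m=1}^{\infty}|\gamma_m|r^m\le1$ for $r\le 1-1/\exp(1/|c_1|)=1-1/\exp(2/|B_1|)$, which is exactly \eqref{logBohrineq}. For the sharpness clause, take $f$ defined by $1+zf''(z)/f'(z)=\phi(z)$: then $p=zf'/f$ solves $p+zp'/p=\phi$ with $p(0)=1$, so uniqueness of the solution forces $zf'(z)/f(z)=\psi(z)$, meaning $f$ is exactly the extremal function of $\mathcal{S}^*(\psi)$ appearing in Theorem~\ref{logbohrstarlike}; if this same $f$ is extremal for the logarithmic bound $|\gamma_m|\le|c_1|/(2m)$, equality is attained at $r=1-1/\exp(2/|B_1|)$ and the radius is sharp.

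The main obstacle is the appeal to the best-dominant property of $\psi$: equation \eqref{briot-sol} is the Briot--Bouquet equation $\psi+z\psi'/(\beta\psi+\gamma)=\phi$ in the degenerate case $\beta=1$, $\gamma=0$, so one must check that the precise hypotheses of the cited theorem (convexity of $\phi$, $\RE\psi>0$, univalence of $\psi$) are all in force — and they are, since the phrase ``$\psi$ the convex solution'' bundles exactly these. Everything downstream of the inclusion $f\in\mathcal{S}^*(\psi)$ is then routine, being nothing more than an application of Theorem~\ref{logbohrstarlike} together with the coefficient identification $c_1=B_1/2$.
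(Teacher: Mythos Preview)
Your proof is correct and follows essentially the same route as the paper: rewrite $f\in\mathcal{C}(\phi)$ as the Briot--Bouquet subordination $p+zp'/p\prec\phi$ for $p=zf'/f$, invoke the best-dominant machinery of \cite{subbook} to conclude $p\prec\psi$, read off the leading coefficient $c_1=B_1/2$ of $\psi$, and apply Theorem~\ref{logbohrstarlike}. One small refinement: you write that ``$\psi$ the convex solution'' bundles in $\RE\psi>0$, but convexity alone does not give that; the paper instead derives $\RE\psi>0$ from $\RE\phi>0$ via the integral representation in \cite[Theorem~3.2d, p.~86]{subbook}, and then uses \cite[Lemma~3.2e, p.~89]{subbook} for the best-dominant step---so when you verify the hypotheses, that is the specific chain of citations to supply.
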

\begin{proof}
	Let us define $p(z):=zf'(z)/f(z)$. Since  $f\in \mathcal{C}(\phi)$, therefore we have $1+zf''(z)/f'(z)\prec \phi(z)$, which can be equivalently written as
	\begin{equation}\label{briot}
	p(z)+\frac{zp'(z)}{p(z)}=1+\frac{zf''(z)}{f'(z)} \prec \phi(z).
	\end{equation}
	Since $\Re\phi(z)>0$ and $\phi$ is convex in $\mathbb{D}$, therefore using \cite[Theorem~3.2d, p.~86]{subbook} the solution $\psi$ of the differential equation \eqref{briot-sol} 
	is analytic in $\mathbb{D}$ with $\Re\psi(z)>0$ and has the following integral form given by
	\begin{equation}\label{Taylor-logbohrconvex}
	\psi(z):= h(z)\left(\int_{0}^{z}\frac{h(t)}{t}dt\right)^{-1}=1+\frac{B_1}{2}z+\frac{{B_1}^2+4B_2}{12} z^2+\cdots,
	\end{equation}
	where 
	\begin{equation*}
	h(z)= z\exp\int_{0}^{z}\frac{\phi(t)-1}{t}dt.
	\end{equation*}
	Since $\Re\psi(z)>0$ and $p$ satisfies the subordination~\eqref{briot}, therefore using \cite[Lemma~3.2e, p.~89]{subbook} we conclude that $\psi$ is univalent and $p\prec \psi$, where $\psi$ is the best dominant. 
	Thus we have obtained that $f\in\mathcal{C}(\phi)$ implies $zf'(z)/f(z) \prec \psi(z)$ and $\psi$ is the best dominant, which is a univalent Carath\'{e}odory function. Now applying Theorem~\ref{logbohrstarlike}, the desired radius is achieved.	
	\qed
\end{proof}

\begin{remark}
	If $f\in \mathcal{C}$. Then the logarithmic coefficients of $f$ satisfies
	\begin{equation*}
	2\sum_{m=1}^{\infty} |\gamma_m| r^m \leq 1, 
	\end{equation*}
	for
	\begin{equation*}
	|z|=r \leq  1-1/e.
	\end{equation*}
	and the radius is sharp for the function $f(z)=z/(1-z)$. The result was obtained by Bhowmik and Das in \cite[Remark on Page no~741]{Bhowmik-2019}.
\end{remark}

Now we estimate the Logarithmic coefficient's bounds for the functions in the class $\mathcal{C}(\psi)$ for the case $\psi(\mathbb{D})$ being convex, which generalize the result \cite[Theorem~2, Sec~2]{choLog-2019} where sharp two initial Logarithmic coefficient's bounds and estimate for the third  bound ($\gamma_m$, for $m=1,2$ and $3$) were discussed.
\begin{theorem}{ (Convex analogue of \cite[Theorem~1, Sec~2]{choLog-2019})}\label{logcoef-convexclass}
	Let $\phi$ be convex in $\mathbb{D}$ 
	and
	suppose $\psi$ be the solution of the differential equation
	\begin{equation*}
	\psi(z)+\frac{z\psi'(z)}{\psi(z)}=\phi(z).
	\end{equation*}
	If $f\in \mathcal{C}(\phi)$ with $\phi(z)=1+B_1z+B_2z^2+\cdots$. Then the logarithmic coefficients of $f$ satisfies the inequalities:
	\begin{enumerate}
		\item If $\psi$ is convex. Then
		\begin{equation}\label{5}
		|\gamma_m| \leq \frac{|B_1|}{4m}, \quad m\in \mathbb{N},
		\end{equation}
		\begin{equation*}
		\sum_{m=1}^{n}|\gamma_m|^2 \leq \frac{1}{4}\sum_{m=1}^{n}\frac{|c_m|^2}{m^2}, \quad n\in \mathbb{N},
		\end{equation*}
		and
		\begin{equation}\label{7}
		\sum_{m=1}^{\infty}|\gamma_m|^2 \leq \frac{1}{4}\sum_{m=1}^{\infty}\frac{|c_m|^2}{m^2}
		\end{equation} 
		where $c_m$ are the Taylor series coefficients of $\psi$ (see Eq.~\eqref{Taylor-logbohrconvex}).
		
		\item If $\psi$ is starlike with respect to $1$. Then
		\begin{equation}\label{8}
		|\gamma_m| \leq \frac{|B_1|}{4}, \quad m\in\mathbb{D}.
		\end{equation} 
		The inequalities $\eqref{5}$, $\eqref{7}$ and $\eqref{8}$ are sharp for the case when the function $f$ defined as $zf'(z)/f(z)=\psi(z)$ serves as an extremal for the logarithmic coefficient's bounds.    
	\end{enumerate}
	
\end{theorem}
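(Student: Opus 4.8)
The plan is to reduce Theorem~\ref{logcoef-convexclass} to the starlike case by the same differential-subordination bridge used in the proof of Theorem~\ref{logbohrconvex}. First I would set $p(z):=zf'(z)/f(z)$ and observe, exactly as in \eqref{briot}, that $f\in\mathcal{C}(\phi)$ is equivalent to $p(z)+zp'(z)/p(z)\prec\phi(z)$. Since $\phi$ is convex with $\Re\phi>0$, the Briot--Bouquet machinery (\cite[Theorem~3.2d, p.~86]{subbook} and \cite[Lemma~3.2e, p.~89]{subbook}) gives that the solution $\psi$ of $\psi(z)+z\psi'(z)/\psi(z)=\phi(z)$ is a univalent Carath\'eodory function, that $p\prec\psi$, and that $\psi$ is the best dominant; its Taylor expansion is the one recorded in \eqref{Taylor-logbohrconvex}, so $c_1=B_1/2$. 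Hence $f\in\mathcal{C}(\phi)$ forces $zf'(z)/f(z)\prec\psi(z)$, i.e. $f\in\mathcal{S}^*(\psi)$.

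Next I would simply invoke \cite[Theorem~1, Sec~2]{choLog-2019} applied to the class $\mathcal{S}^*(\psi)$. That result states: if $g\in\mathcal{S}^*(\Psi)$ with $\Psi(z)=1+c_1z+\cdots$, then its logarithmic coefficients $\gamma_m$ satisfy $|\gamma_m|\le |c_1|/(2m)$ when $\Psi(\mathbb{D})$ is convex, together with the companion $\ell^2$-type inequalities $\sum_{m=1}^n|\gamma_m|^2\le \tfrac14\sum_{m=1}^n|c_m|^2/m^2$ and its $n\to\infty$ limit; and $|\gamma_m|\le |c_1|/2$ when $\Psi$ is merely starlike with respect to $1$. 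Substituting $\Psi=\psi$ and $c_1=B_1/2$ converts $|c_1|/(2m)$ into $|B_1|/(4m)$ and $|c_1|/2$ into $|B_1|/4$, which are precisely \eqref{5}, the two square-sum inequalities, and \eqref{8}. For the convex sub-case I should double-check that ``$\psi$ convex'' is the hypothesis actually needed to apply the convex branch of \cite{choLog-2019} — this is where the statement's standing assumption that $\psi$ be convex (which \eqref{briot-sol} is designed to guarantee under suitable conditions on $\phi$, via \cite[Theorem~3.2d]{subbook}) is used — and that ``$\psi$ starlike w.r.t.\ $1$'' suffices for the weaker branch.

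Finally, sharpness: the extremal function is $f$ with $zf'(z)/f(z)=\psi(z)$, equivalently the Ma--Minda starlike function associated with $\psi$. Since this $f$ also satisfies $1+zf''(z)/f'(z)=\phi(z)$ by construction of $\psi$ (it is the solution of that ODE with the normalization in \eqref{Taylor-logbohrconvex}), it lies in $\mathcal{C}(\phi)$, and for it equality holds in the corresponding bound of \cite[Theorem~1, Sec~2]{choLog-2019}; this transfers to equality in \eqref{5}, \eqref{7}, \eqref{8}, giving the asserted sharpness.

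The main obstacle I anticipate is not the deduction itself, which is essentially bookkeeping, but verifying that the hypotheses of \cite[Theorem~1, Sec~2]{choLog-2019} are met by $\psi$ — in particular that the differential equation \eqref{briot-sol} does produce a $\psi$ that is convex (resp.\ starlike w.r.t.\ $1$) under the stated assumption that $\phi$ is convex, so that one is genuinely allowed to feed $\psi$ into the starlike theorem. The other point requiring a line of care is the Taylor coefficient identity $c_1=B_1/2$, needed to get the constant $1/(4m)$ rather than $1/(2m)$; this is read off directly from \eqref{Taylor-logbohrconvex}.
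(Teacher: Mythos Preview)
Your proposal is correct and follows exactly the paper's own argument: the paper's proof is the single sentence ``Following the Proof of Theorem~\ref{logbohrconvex}, and then applying the result \cite[Theorem~1, Sec~2]{choLog-2019} completes the proof,'' which is precisely your reduction $f\in\mathcal{C}(\phi)\Rightarrow zf'/f\prec\psi\Rightarrow f\in\mathcal{S}^*(\psi)$ followed by the starlike logarithmic-coefficient theorem, together with the identification $c_1=B_1/2$ from \eqref{Taylor-logbohrconvex}. Note that the ``obstacle'' you anticipate is not one: the convexity (resp.\ starlikeness with respect to $1$) of $\psi$ is an explicit hypothesis of parts (1) and (2) of the theorem, not something you must derive from the convexity of $\phi$, so you may feed $\psi$ directly into \cite[Theorem~1, Sec~2]{choLog-2019} without further checks.
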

\begin{proof}
	Following the Proof of Theorem~\ref{logbohrconvex}, and then applying the result \cite[Theorem~1, Sec~2]{choLog-2019} completes the proof. \qed
\end{proof}

In our next result, we show the application to the Janowski class, which covers many well-known classes. Here $\mathcal{C}[D,E]:=\mathcal{C}(({1+Dz})/({1+Ez})$.

Let us first consider the confluent and Gaussian hypergeometric functions, respectively as follows:
\begin{equation}\label{janw-solution}
q(z)=
\left\{
\begin{array}{lr}
{}_{2}F_{1}\left(1-\frac{D}{E}, 1, 2; \frac{Ez}{1+Ez} \right), & \text{if}\; E\neq0;\\

{}_{1}F_{1}\left(1,2;-Dz\right), & \text{if}\; E=0.
\end{array}
\right.
\end{equation} 
We note that for $1+D/E\geq0$ and $-1\leq E<0$, 
\begin{equation*}
\min_{|z|=r}\Re\psi(z) = {\psi(-r)}=\frac{1}{q(-r)} >0, \; \text{where}\; \psi(z)=1/q(z).
\end{equation*}

\begin{corollary}\label{AB}
	Let $f$ belongs to $\mathcal{C}[D,E]$, where $-1\leq E<D\leq1$, and in addition $q$ as defined in \eqref{janw-solution} be convex.  Then the logarithmic coefficients of $f$ satisfies \eqref{logBohrineq} for
	\begin{equation*}
	|z|=r \leq
	\left\{
	\begin{array}{lr}
	1-\frac{1}{\exp\big(\frac{2}{D-E}\big)}, & \text{if}\; E, D\neq0;\\\\
	
	1-\frac{1}{\exp\big(\frac{2}{|E|}\big)}, & \text{if}\; E\neq0, D=0,
	
	\end{array}
	\right.
	\end{equation*}
\end{corollary}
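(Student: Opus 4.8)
The plan is to derive Corollary~\ref{AB} as a direct specialization of Theorem~\ref{logbohrconvex} to the choice $\phi(z)=(1+Dz)/(1+Ez)$, so the whole argument reduces to (a) checking that this $\phi$ meets the hypotheses of that theorem, (b) identifying the resulting best dominant $\psi$ with $1/q$ for the hypergeometric function $q$ of \eqref{janw-solution}, and (c) reading off $B_1$. For step (a): since $\phi$ is a Möbius transformation and the pole $z=-1/E$ lies in $\{|z|\ge 1\}$, the image $\phi(\mathbb{D})$ is a disk (when $|E|<1$) or a half-plane (when $|E|=1$), hence convex; moreover $\phi(0)=1$ and the Janowski condition $-1\le E<D\le 1$ guarantees $\Re\phi(z)>0$ on $\mathbb{D}$. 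Thus $\phi$ is a convex Carath\'eodory function and Theorem~\ref{logbohrconvex} is in force.

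For step (b) I would solve the Briot--Bouquet equation $\psi(z)+z\psi'(z)/\psi(z)=\phi(z)$ using the integral representation recalled in the proof of Theorem~\ref{logbohrconvex}. Here $(\phi(t)-1)/t=(D-E)/(1+Et)$ for $E\neq 0$, so $h(z)=z\exp\int_0^z (\phi(t)-1)/t\,dt=z(1+Ez)^{(D-E)/E}$, and then $\int_0^z h(t)/t\,dt=\big((1+Ez)^{D/E}-1\big)/D$; dividing, $\psi(z)=h(z)\big(\int_0^z h(t)/t\,dt\big)^{-1}$ collapses, via the Euler integral representation of ${}_2F_1$ (and its confluent limit as $E\to 0$, giving the ${}_1F_1$ branch), to $\psi(z)=1/q(z)$ with $q$ exactly as in \eqref{janw-solution}. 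The remaining structural input demanded by Theorem~\ref{logbohrconvex} — that $\psi$ is the univalent, positive-real-part best dominant of $zf'(z)/f(z)\prec\psi$ — follows from \cite[Lemma~3.2e, p.~89]{subbook} together with the observation recorded just before the corollary that $\min_{|z|=r}\Re\psi(z)=\psi(-r)=1/q(-r)>0$; the added hypothesis that $q$ is convex is what guarantees $\psi=1/q$ is the convex solution appearing in Theorem~\ref{logbohrconvex}, so that theorem applies verbatim.

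For step (c), expanding $\phi(z)=1+(D-E)z+\cdots$ shows $B_1=D-E$, whence Theorem~\ref{logbohrconvex} yields \eqref{logBohrineq} for $|z|=r\le 1-1/\exp(2/|D-E|)$; taking $D=0$ gives $|B_1|=|E|$ and the second displayed bound. Sharpness is inherited from Theorem~\ref{logbohrconvex} through the function with $zf'(z)/f(z)=\psi(z)$. The step I expect to be the main obstacle is precisely (b): first the bookkeeping that passes from the elementary closed form of the Briot--Bouquet solution to the hypergeometric normalization \eqref{janw-solution}, and, more delicately, verifying that convexity of $q$ really transfers to $\psi=1/q$ being a convex (not merely Carath\'eodory) best dominant, since this is the only point in the argument that is not formal manipulation of results already established in the paper.
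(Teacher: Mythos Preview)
Your proposal is correct and follows essentially the same route as the paper: specialize Theorem~\ref{logbohrconvex} to $\phi(z)=(1+Dz)/(1+Ez)$, compute the Briot--Bouquet solution $\psi=1/q$ via the integral formula (the paper records the same elementary closed forms you obtain), and read off $B_1=D-E$ to get the stated radii. The paper's proof likewise ends by invoking Theorem~\ref{logbohrstarlike} once $\psi$ is in hand, and---exactly as you anticipate in your final paragraph---it does not justify the passage from ``$q$ convex'' to ``$\psi(\mathbb{D})$ convex'' beyond taking it as a standing hypothesis; so your caution there is well placed but does not separate your argument from the paper's.
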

\begin{proof}
	In Theorem~\ref{logbohrconvex}, put $\phi(z)= (1+D z)/(1+E z)$. Then we have $\psi(z):= 1/q(z)$, where
	\begin{equation*}
	q(z)=
	\left\{
	\begin{array}{lr}
	\int_{0}^{1}\left(\frac{1+Etz}{1+Ez}\right)^{\frac{D-E}{E}}dt, & \text{if}\; E\neq0;\\\\

	\int_{0}^{1}e^{D(t-1)z}dt, & \text{if}\; E=0,
	\end{array}
	\right.
	\end{equation*} 
	which with some computation can be written explicitly as
	\begin{equation*}
	\psi(z)=
	\left\{
	\begin{array}{lr}
	\frac{Dz (1+Ez)^{\frac{D}{E}-1}}{-1+(1+Ez)^{\frac{D}{E}} }=1+\frac{D-E}{2}z+ \frac{D^2-6DE+5E^2}{12}z^2+\cdots, & \text{if}\; E, D\neq0;\\\\
	
	\frac{Ez}{(1+Ez)\log(1+Ez)}=1-\frac{E}{2}z+ \frac{5E^2}{12}z^2-\cdots, & \text{if}\; E\neq0, D=0;\\\\
	
	\frac{Dz e^{Dz}}{e^{Dz}-1}, & \text{if}\; E=0,
	\end{array}
	\right.
	\end{equation*}
	and satisfies the differential Eq.~\eqref{briot-sol}. Hence, the result follows from Theorem~\ref{logbohrstarlike}. \qed
\end{proof}

Now we have the result for the class of convex functions of order $\alpha$ using Corollary~\ref{AB}:
\begin{corollary}
	Let $f$ belongs to $\mathcal{C}[1-2\alpha,-1]$, where $0\leq \alpha<1$. Then the logarithmic coefficients of $f$ satisfies \eqref{logBohrineq} for
	\begin{equation*}
	|z|=r \leq 1-\exp(1/(\alpha-1)), \;\text{when}\; \alpha\neq 1/2
	\end{equation*}
	and 
	\begin{equation*}
	|z|=r \leq 1-1/\exp(2), \;\text{when}\; \alpha= 1/2.
	\end{equation*}
\end{corollary}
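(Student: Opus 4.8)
The plan is to obtain this corollary as an immediate specialization of Corollary~\ref{AB}. Observe that the Janowski class $\mathcal{C}[1-2\alpha,-1]$ is exactly the class of convex functions of order $\alpha$, because the map $z\mapsto (1+(1-2\alpha)z)/(1-z)$ carries $\mathbb{D}$ onto the half-plane $\{w:\Re w>\alpha\}$; moreover the choice $D=1-2\alpha$, $E=-1$ satisfies the requirement $-1\leq E<D\leq 1$ of Corollary~\ref{AB} precisely when $0\leq\alpha<1$. So the whole task is to substitute these values and simplify the two formulas that Corollary~\ref{AB} produces.

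First I would check the standing hypothesis of Corollary~\ref{AB}, namely that the associated function $q$ from \eqref{janw-solution} is convex for this parameter range. For $D=1-2\alpha$, $E=-1$ one has $1+D/E=2\alpha\geq0$ and $-1\leq E<0$, so the condition recorded just before Corollary~\ref{AB} applies and $\psi(z)=1/q(z)$ is a Carath\'eodory function with $\min_{|z|=r}\Re\psi(z)=\psi(-r)>0$. Verifying that $q$ is convex (equivalently, that $\psi$ is the convex solution demanded in Theorem~\ref{logbohrconvex}) is the only point that is not pure bookkeeping; for $\alpha=0$ it is transparent, since then $q(z)=1-z$ and $\psi(z)=1/(1-z)$, which maps $\mathbb{D}$ onto $\{\Re w>1/2\}$.

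Next I would split into the two cases already present in Corollary~\ref{AB}. Since $E=-1\neq0$ always, the only question is whether $D=1-2\alpha$ vanishes. If $\alpha\neq 1/2$, then $D\neq 0\neq E$ and the radius supplied by Corollary~\ref{AB} is $1-1/\exp\big(2/(D-E)\big)$; because $D-E=(1-2\alpha)-(-1)=2(1-\alpha)$, this equals $1-1/\exp\big(1/(1-\alpha)\big)=1-\exp\big(1/(\alpha-1)\big)$. If $\alpha=1/2$, then $D=0$ and $E=-1\neq0$, so Corollary~\ref{AB} yields the radius $1-1/\exp\big(2/|E|\big)=1-1/\exp(2)$. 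This is precisely the stated dichotomy.

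The main (and only) obstacle is confirming the convexity of $q$—and hence the applicability of Corollary~\ref{AB}—for the whole range $0\leq\alpha<1$ rather than just $\alpha=0$; once that is in hand the conclusion is a one-line substitution, and sharpness is inherited from Theorem~\ref{logbohrconvex} through the extremal $f$ determined by $zf'(z)/f(z)=\psi(z)$.
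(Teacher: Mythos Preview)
Your proposal is correct and follows exactly the route the paper takes: the corollary is obtained by substituting $D=1-2\alpha$, $E=-1$ into Corollary~\ref{AB} and splitting according to whether $D$ vanishes. You are also right to flag the convexity hypothesis on $q$; the paper simply inherits it from Corollary~\ref{AB} without separate verification.
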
 

\begin{corollary}\label{Janwoski-Disk}
	Let $f$ belongs to $\mathcal{C}[D,0]$. Then the logarithmic coefficients of $f$ satisfies \eqref{logBohrineq} for
	\begin{equation*}
	|z|=r \leq 1-\exp(-\tfrac{2}{D}),
	\end{equation*}
\end{corollary}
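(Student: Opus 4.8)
The plan is to obtain this as the missing $E=0$ case of Corollary~\ref{AB}, i.e.\ to specialise Theorem~\ref{logbohrconvex} to the Janowski datum $\phi(z)=(1+Dz)/(1+Ez)$ with $E=0$, so that $\phi(z)=1+Dz$. First I would check the hypothesis of Theorem~\ref{logbohrconvex}: since $f\in\mathcal{C}[D,0]$ and the admissible range $-1\le E<D\le 1$ forces $0<D\le 1$ here, the function $\phi(z)=1+Dz$ maps $\mathbb{D}$ onto a disk, hence is convex in $\mathbb{D}$; its Taylor expansion $\phi(z)=1+B_1z+B_2z^2+\cdots$ has $B_1=D$.

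Next I would identify the dominant $\psi$, namely the convex solution of the Briot--Bouquet equation \eqref{briot-sol} attached to this $\phi$. This is precisely the $E=0$ branch already worked out inside the proof of Corollary~\ref{AB}: solving $\psi(z)+z\psi'(z)/\psi(z)=1+Dz$ gives $\psi(z)=1/q(z)$ with $q(z)=\int_0^1 e^{D(t-1)z}\,dt={}_1F_1(1,2;-Dz)$ as in \eqref{janw-solution}, equivalently
\[
\psi(z)=\frac{Dz\,e^{Dz}}{e^{Dz}-1},
\]
whose expansion \eqref{Taylor-logbohrconvex} begins $\psi(z)=1+\tfrac{D}{2}z+\cdots$, consistent with $B_1=D$. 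Assuming, as in Corollary~\ref{AB}, that $q$ (equivalently $\psi$) is convex, so that the ``$\psi(\mathbb{D})$ convex'' branch of Theorem~\ref{logbohrstarlike} feeding Theorem~\ref{logbohrconvex} is available, all the hypotheses are in place.

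Finally I would substitute $|B_1|=D$ into the radius produced by Theorem~\ref{logbohrconvex}, namely $|z|=r\le 1-1/\exp(2/|B_1|)$, which with $B_1=D>0$ becomes
\[
r\le 1-\frac{1}{\exp(2/D)}=1-\exp\!\left(-\tfrac{2}{D}\right),
\]
and \eqref{logBohrineq} holds there, with extremal $f$ determined by $zf'(z)/f(z)=\psi(z)$; this is exactly the asserted bound. The only step that is not a pure specialisation is the convexity of $q(z)={}_1F_1(1,2;-Dz)$ (equivalently of $\psi=1/q$), which is the point that carries over as a standing assumption from Corollary~\ref{AB} and is what I would expect to be the main thing requiring justification; once it is granted, the corollary is immediate.
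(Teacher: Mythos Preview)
Your structural outline is correct and matches the paper's route: specialise Theorem~\ref{logbohrconvex} to $\phi(z)=1+Dz$, identify the Briot--Bouquet dominant as $\psi(z)=Dz e^{Dz}/(e^{Dz}-1)$ via the $E=0$ branch of the computation in Corollary~\ref{AB}, and read off the radius from $|B_1|=D$.

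The gap is in how you handle the convexity of $\psi$. In Corollary~\ref{AB} convexity of $q$ is an explicit \emph{hypothesis}, but the present corollary is stated unconditionally: ``Let $f$ belong to $\mathcal{C}[D,0]$'' with no side assumption on $q$ or $\psi$. So you cannot carry convexity over ``as a standing assumption''; it must be proved here, and this is exactly the content that distinguishes this corollary from a mere specialisation of Corollary~\ref{AB}. The paper supplies this missing step: it observes that $\psi(z)=l(Dz)$ with $l(z)=ze^{z}/(e^{z}-1)$, verifies (by direct computation) that $l$ is convex univalent in $\mathbb{D}$, and then uses $0<D\le 1$ to conclude that the precomposition $l(Dz)$ is convex in $\mathbb{D}$. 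With that in hand, Theorem~\ref{logbohrconvex} applies and gives the stated radius. Your remark that the convexity is ``the main thing requiring justification'' is spot on --- that is precisely what the paper's proof does and what your proposal still owes.
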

\begin{proof}
	From the proof of Corollary~\ref{AB}, we obtain that 
	$$\psi(z)=Dz e^{Dz}/(e^{Dz}-1)=1+\frac{D}{2}z+\frac{D^2}{2}z^2- \cdots,$$ when $\phi(z)=1+Dz.$ Now with a little computation, we find that the function $l(z)=ze^z/(e^z-1)$ is convex univalent in $\mathbb{D}$. Therefore, the function $\psi(z)=l(Dz)$ is also convex in $\mathbb{D}$ for each fixed $0<D\leq1$.   \qed
\end{proof}

\begin{theorem}\label{hallen}
	Let $\phi$ be convex in $\mathbb{D}$ with $\Re \phi(z)>0$, and suppose $f\in \mathcal{A}$ satisfies the differential subordination
	\begin{equation}\label{halb}
	\frac{zf'(z)}{f(z)}+z\left(\frac{zf'(z)}{f(z)}\right)' \prec \phi(z)=1+B_1z+B_2 z^2+ \cdots.
	\end{equation}
	Then the logarithmic coefficients of $f$ satisfies \eqref{logBohrineq} for
	\begin{equation*}
	|z|=r \leq  1-\frac{1}{\exp\big(\frac{2}{|B_1|}\big)}.
	\end{equation*}
	The result is sharp for the case when the function $f$ defined as $zf'(z)/f(z)=\psi(z)$ is an extremal for the logarithmic coefficient's bounds.
	
\end{theorem}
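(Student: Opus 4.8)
The plan is to show that the differential subordination \eqref{halb} is, after the substitution $p(z):=zf'(z)/f(z)$, nothing but a first-order Hallenbeck--Ruscheweyh subordination, which forces $f$ into a Ma--Minda starlike class $\mathcal{S}^*(\psi)$ with $\psi(\mathbb{D})$ convex; the statement then follows by quoting Theorem~\ref{logbohrstarlike}. First I would put $p(z):=zf'(z)/f(z)$, analytic in $\mathbb{D}$ with $p(0)=1$, and note that $z\bigl(zf'(z)/f(z)\bigr)'=zp'(z)$, so that \eqref{halb} reads
\[
p(z)+zp'(z)\prec\phi(z),\qquad z\in\mathbb{D}.
\]

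Since $\phi$ is convex with $\phi(0)=1$, this is precisely the hypothesis of the Hallenbeck--Ruscheweyh differential subordination theorem (the case $\gamma=1$; see \cite{subbook}), which gives $p(z)\prec\psi(z)\prec\phi(z)$, where the best dominant
\[
\psi(z):=\frac{1}{z}\int_{0}^{z}\phi(t)\,dt=\int_{0}^{1}\phi(tz)\,dt=1+\frac{B_1}{2}z+\frac{B_2}{3}z^2+\cdots
\]
is convex in $\mathbb{D}$. From the averaged form $\psi(z)=\int_{0}^{1}\phi(tz)\,dt$ one reads off $\Re\psi>0$, and $\psi$ carries over from $\phi$ the normalizations needed to be an admissible Ma--Minda function with convex image; its leading coefficient is $\psi'(0)=B_1/2$. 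Hence $zf'(z)/f(z)\prec\psi(z)$, i.e. $f\in\mathcal{S}^*(\psi)$, and Theorem~\ref{logbohrstarlike} applies with the role of $B_1$ there played by $B_1/2$, giving
\[
2\sum_{m=1}^{\infty}|\gamma_m|r^m\le1\qquad\text{for}\qquad |z|=r\le 1-\frac{1}{\exp\bigl(1/|B_1/2|\bigr)}=1-\frac{1}{\exp\bigl(2/|B_1|\bigr)},
\]
which is exactly the claimed radius, namely \eqref{logBohrineq} on that disk.

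For sharpness, note that the function $f$ with $zf'(z)/f(z)=\psi(z)$ satisfies $p+zp'=\psi+z\psi'=\phi$, so it realizes equality in \eqref{halb}; when this $f$ is extremal for the logarithmic coefficients of $\mathcal{S}^*(\psi)$, the sharpness clause of Theorem~\ref{logbohrstarlike} transfers directly and equality is attained at $r=1-1/\exp(2/|B_1|)$. I expect the only genuinely delicate point to be the middle step: correctly recognizing \eqref{halb} as a Hallenbeck--Ruscheweyh subordination and checking that its best dominant $\psi=z^{-1}\int_{0}^{z}\phi$ retains convexity together with the Carath\'eodory property, so that Theorem~\ref{logbohrstarlike} is legitimately applicable; the coefficient identity $\psi'(0)=B_1/2$ is what turns the exponent $1/|B_1|$ of that theorem into the exponent $2/|B_1|$ appearing here.
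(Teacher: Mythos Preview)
Your proposal is correct and follows essentially the same route as the paper: substitute $p=zf'/f$, recognise $p+zp'\prec\phi$ as a Hallenbeck--Ruscheweyh subordination with best dominant $\psi(z)=z^{-1}\int_0^z\phi(t)\,dt$ (convex, leading coefficient $B_1/2$), verify $\Re\psi>0$, and then invoke Theorem~\ref{logbohrstarlike}. The only cosmetic difference is that you read off $\Re\psi>0$ directly from the averaged integral $\psi(z)=\int_0^1\phi(tz)\,dt$, whereas the paper quotes an integral-operator result from \cite[Theorem~4.2a]{subbook} for the same conclusion.
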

\begin{proof}
	Let $p(z)=zf'(z)/f(z)$. Then the subordination \eqref{halb} can equivalently be written as:
	$p(z)+zp'(z) \prec \phi(z)$.	
	A simple calculation show that the analytic function $$\psi(z):=({1}/{z})\int_{0}^{z}{\phi(t)}dt= 1+\frac{B_1}{2}z+\frac{B_2}{3}z^2+\cdots$$
	satisfies
	\begin{equation*}
	\psi(z)+z\psi'(z)=\phi(z).
	\end{equation*}
	Now from the Hallenbeck and Ruscheweyh result \cite[Theorem~ 3.1b, p.~71]{subbook}, we have $p\prec \psi$, where $\psi$ is the best dominant and also convex. Further, since $\Re\phi(z)>0$, using the integral operator \cite[Theorem~ 4.2a, p.~202]{subbook} preserving functions with positive real part, we see that $\psi$ is a Carathe\'{o}dory function. Thus we have
	$$\frac{zf'(z)}{f(z)} \prec \psi(z).$$
	Now applying Theorem~\ref{logbohrstarlike}, the result follows.   \qed
\end{proof}		

\begin{corollary}
	Suppose $f\in \mathcal{A}$ satisfies the differential subordination
	\begin{equation*}
	\frac{zf'(z)}{f(z)}+z\left(\frac{zf'(z)}{f(z)}\right)' \prec \frac{1+z}{1-z}.
	\end{equation*}
	Then the logarithmic coefficients of $f$ satisfies \eqref{logBohrineq} for
	$|z|=r \leq (e-1)/e\approx 0.632121,$
\end{corollary}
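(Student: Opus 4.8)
The plan is to obtain this simply as the specialization of Theorem~\ref{hallen} to the choice $\phi(z)=(1+z)/(1-z)$, so that no new argument is needed beyond verifying the hypotheses and evaluating the radius formula.

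First I would check that $\phi(z)=(1+z)/(1-z)$ meets the assumptions of Theorem~\ref{hallen}. This M\"obius map carries $\mathbb{D}$ onto the right half-plane $\{w:\Re w>0\}$, which is a convex domain, so $\phi$ is convex in $\mathbb{D}$; moreover $\phi(\mathbb{D})$ lies in the right half-plane, whence $\Re\phi(z)>0$ on $\mathbb{D}$. Consequently the displayed differential subordination in the corollary is exactly the hypothesis \eqref{halb} of Theorem~\ref{hallen}, and its conclusion \eqref{logBohrineq} applies with the radius $1-1/\exp(2/|B_1|)$.

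It then remains only to identify $B_1$. Expanding $\tfrac{1+z}{1-z}=1+2z+2z^2+\cdots$ gives $B_1=2$, so $2/|B_1|=1$ and the radius becomes
\[
1-\frac{1}{\exp(1)}=\frac{e-1}{e}\approx 0.632121,
\]
as claimed. Sharpness is inherited verbatim from Theorem~\ref{hallen}: the extremal $f$ is the one with $zf'(z)/f(z)=\psi(z)$, where $\psi(z)=\frac{1}{z}\int_0^z\frac{1+t}{1-t}\,dt=-1-\frac{2\log(1-z)}{z}$ is the best dominant produced in that proof. I do not foresee any genuine obstacle here; the only points needing a moment's care are the classical fact that $(1+z)/(1-z)$ is convex — not merely starlike — in $\mathbb{D}$, which is immediate from its half-plane image, and the trivial arithmetic $2/|B_1|=1$. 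Everything deeper is already packaged inside Theorem~\ref{hallen}, and through it in Theorem~\ref{logbohrstarlike} together with the Hallenbeck--Ruscheweyh subordination theorem and the positivity-preserving integral-operator result cited from \cite{subbook}.
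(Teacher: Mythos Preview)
Your proposal is correct and is exactly the intended approach: the paper states this corollary without proof immediately after Theorem~\ref{hallen}, so it is obtained by specializing $\phi(z)=(1+z)/(1-z)$, checking convexity and $\Re\phi>0$, and reading off $B_1=2$ to get the radius $1-1/e$.
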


\begin{corollary}
	Suppose $f\in \mathcal{A}$ satisfies the differential subordination
	\begin{equation*}
	\frac{zf'(z)}{f(z)}+z\left(\frac{zf'(z)}{f(z)}\right)' \prec e^z.
	\end{equation*}
	Then the logarithmic coefficients of $f$ satisfies \eqref{logBohrineq} for
	$|z|=r \leq (e^2-1)/e^2 \approx 0.864665,$
\end{corollary}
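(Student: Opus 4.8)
The plan is to deduce this as the special case $\phi(z)=e^z$ of Theorem~\ref{hallen}. The only genuine work is to verify that $\phi(z)=e^z$ meets the two hypotheses of that theorem, namely that it is convex in $\mathbb{D}$ and has positive real part there, and then to record the relevant coefficient $B_1$.

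For convexity, I would compute $1+z\phi''(z)/\phi'(z)=1+z$, since $\phi'=\phi''=e^z$; its real part equals $1+\RE z>0$ on $\mathbb{D}$, so $e^z$ is convex (hence univalent) in the unit disk. For positivity of the real part, writing $z=x+iy$ with $x^2+y^2<1$ gives $\RE e^z=e^x\cos y$, and since $|y|<1<\pi/2$ the factor $\cos y$ is strictly positive, so $\RE\phi(z)>0$ on $\mathbb{D}$. Reading off the Taylor expansion $e^z=1+z+\tfrac12 z^2+\cdots$ we obtain $B_1=1$; and if one wants to display the extremal explicitly, the best dominant produced in the proof of Theorem~\ref{hallen} is $\psi(z)=\frac1z\int_0^z e^t\,dt=(e^z-1)/z$, so the sharp function is the $f$ determined by $zf'(z)/f(z)=(e^z-1)/z$.

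With these facts established, Theorem~\ref{hallen} applies verbatim: any $f\in\mathcal{A}$ satisfying the displayed subordination has logarithmic coefficients obeying \eqref{logBohrineq} on $|z|=r\le 1-1/\exp(2/|B_1|)$, which for $B_1=1$ is $r\le 1-e^{-2}=(e^2-1)/e^2\approx 0.864665$, with sharpness inherited from that theorem. There is no real obstacle here: the whole content reduces to the two one-line verifications that $e^z$ is convex and has positive real part on $\mathbb{D}$, after which the corollary is pure bookkeeping.
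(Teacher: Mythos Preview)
Your proposal is correct and is exactly the intended argument: the paper states this corollary immediately after Theorem~\ref{hallen} without a separate proof, so the implicit proof is precisely the specialization $\phi(z)=e^z$ that you carry out, together with the routine checks of convexity, positive real part, and $B_1=1$.
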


\begin{theorem}
	Let $\phi$ be convex in $\mathbb{D}$ with $\Re \phi(z)>0$, and suppose $f\in \mathcal{A}$ satisfies the differential subordination
	\begin{equation}\label{p2p}
	\frac{zf'(z)}{f(z)}\left(\frac{zf'(z)}{f(z)}+2z\left(\frac{zf'(z)}{f(z)}\right)'\right) \prec \phi(z)=1+B_1z+B_2 z^2+ \cdots, B_1\neq 0.
	\end{equation}
	Then the logarithmic coefficients of $f$ satisfies \eqref{logBohrineq} for
	\begin{equation*}
	|z|=r \leq 1-\frac{1}{\exp(\frac{4}{|B_1|})}.
	\end{equation*}
	The result is sharp for the case when the function $f$ defined as $zf'(z)/f(z)=\sqrt{\psi(z)}$ is an extremal for the logarithmic coefficient's bounds.
\end{theorem}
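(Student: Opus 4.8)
The plan is to collapse the second-order subordination \eqref{p2p} into a first-order one for the \emph{square} of $p(z):=zf'(z)/f(z)$, after which the argument runs exactly as in the proof of Theorem~\ref{hallen}. First I would note that $p(0)=1$ because $f(z)=z+\cdots$, and set $P:=p^{2}$, so that $P(0)=1$ and, using $(p^{2})'=2pp'$, the left-hand side of \eqref{p2p} equals $p^{2}+2zpp'=P(z)+zP'(z)$. Hence \eqref{p2p} reads $P(z)+zP'(z)\prec\phi(z)$. Since $\phi$ is convex, the Hallenbeck--Ruscheweyh theorem \cite[Theorem~3.1b, p.~71]{subbook} gives $P\prec\psi$, where
\begin{equation*}
\psi(z):=\frac{1}{z}\int_{0}^{z}\phi(t)\,dt=1+\frac{B_{1}}{2}z+\frac{B_{2}}{3}z^{2}+\cdots
\end{equation*}
is the best dominant and is itself convex; and because $\Re\phi>0$, the positive-real-part preserving integral operator \cite[Theorem~4.2a, p.~202]{subbook} shows $\Re\psi>0$, so $\psi$ is a univalent Carath\'eodory function.

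Next I would recover $p$ from $P=p^{2}$ by taking the principal square root. As $\psi(\mathbb{D})$ lies in the right half-plane, $Q:=\sqrt{\psi}$ is single-valued and analytic on $\mathbb{D}$ with $Q(0)=1$, has range in the sector $\{|\arg w|<\pi/4\}$ (in particular $\Re Q>0$), and is univalent because $w\mapsto\sqrt{w}$ is injective on the right half-plane and $\psi$ is univalent. Writing $P=\psi\circ\omega$ for a Schwarz function $\omega$, we get $p^{2}=Q(\omega)^{2}$, so $z\mapsto p(z)/Q(\omega(z))$ is a continuous $\{\pm1\}$-valued, hence constant, function on $\mathbb{D}$; evaluating at $z=0$, where $p(0)=1=Q(0)$, pins the constant to $+1$, whence $p=Q\circ\omega$, i.e. $zf'(z)/f(z)\prec\sqrt{\psi(z)}$. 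Comparing Taylor coefficients,
\begin{equation*}
\sqrt{\psi(z)}=\Bigl(1+\tfrac{B_{1}}{2}z+\cdots\Bigr)^{1/2}=1+\frac{B_{1}}{4}z+\cdots,
\end{equation*}
so the defining function $\sqrt{\psi}$ has first coefficient $B_{1}/4$; applying Theorem~\ref{logbohrstarlike} to $f\in\mathcal{S}^{*}(\sqrt{\psi})$ then yields $2\sum_{m=1}^{\infty}|\gamma_{m}|r^{m}\le1$ for $|z|=r\le 1-\frac{1}{\exp(4/|B_{1}|)}$, since $1/|B_{1}/4|=4/|B_{1}|$. Sharpness follows, as in the preceding theorems, from the function $f$ with $zf'(z)/f(z)=\sqrt{\psi(z)}$, for which the logarithmic bound of \cite[Theorem~1, Sec~2]{choLog-2019} is attained.

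The delicate point — and the main obstacle — is to verify that $\sqrt{\psi}$ really meets all the hypotheses of Theorem~\ref{logbohrstarlike}, above all that $\sqrt{\psi(\mathbb{D})}$ is convex, for it is convexity of the image that makes the sharp estimate $|\gamma_{m}|\le|B_{1}|/(8m)$ available rather than only the weaker bound valid when the image is merely starlike with respect to $1$. I would try to establish this by checking $\Re(1+zQ''(z)/Q'(z))>0$ directly from $\psi=Q^{2}$ together with the convexity of $\psi$ and $\Re Q>0$; failing a clean general argument, one carries convexity of $\sqrt{\psi}$ as an additional running hypothesis, exactly as $q$ is assumed convex in Corollary~\ref{AB}. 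The remaining items — the branch and sign bookkeeping for the square root, and the symmetry of $\sqrt{\psi}(\mathbb{D})$ about the real axis inherited from that of $\phi(\mathbb{D})$ — are routine.
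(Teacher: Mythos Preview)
Your approach is exactly the paper's: substitute $P=p^{2}$, apply Hallenbeck--Ruscheweyh to $P+zP'\prec\phi$ to obtain $P\prec\psi$ with $\psi(z)=z^{-1}\int_0^z\phi$, pass to $p\prec\sqrt{\psi}$, read off the leading coefficient $B_1/4$, and invoke Theorem~\ref{logbohrstarlike}. Your branch/sign bookkeeping for the square root is a bit more explicit than the paper's, but the skeleton is identical.

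On the point you flag as the obstacle --- the convexity of $\sqrt{\psi(\mathbb{D})}$ needed to feed Theorem~\ref{logbohrstarlike} --- the paper does not simply assume it. It inserts an extra step: setting $q=1+z\psi''/\psi'$, it derives
\[
q(z)+\frac{zq'(z)}{q(z)+1}=1+\frac{z\phi''(z)}{\phi'(z)},
\]
and, since the right side has positive real part (as $\phi$ is convex), appeals to the admissibility machinery \cite[Theorem~2.3i(i), p.~35]{subbook} to conclude $\Re q>0$, i.e.\ $\psi$ is convex; it then records that the normalised $\psi$ is starlike (hence univalent) and that $\Re\sqrt{\psi}>0$, and finally asserts that $\sqrt{\psi}$ is ``Carath\'eodory univalent convex''. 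Note, however, that this chain re-establishes the convexity of $\psi$ (already delivered by Hallenbeck--Ruscheweyh) rather than the convexity of $\sqrt{\psi}$ itself; the passage from ``$\psi$ convex with $\Re\psi>0$'' to ``$\sqrt{\psi}$ convex'' is stated rather than argued. So your instinct that this is the delicate step is correct, and your proposed fallback --- carrying convexity of $\sqrt{\psi}$ as a standing hypothesis, in the spirit of Corollary~\ref{AB} --- is a reasonable way to make the statement airtight.
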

\begin{proof}
	Let $p(z)=zf'(z)/f(z)$. Then the subordination \eqref{p2p} can be equivalently written as:
	\begin{equation*}
	p^2(z)+2 zp(z)p'(z) \prec \phi(z),
	\end{equation*}
	which using the change of variable $P(z)=p^2(z)$ becomes
	\begin{equation*}
	P(z)+zP'(z) \prec \phi(z).
	\end{equation*}
	Now proceeding as in Theorem~\ref{hallen}, we see that $p(z) \prec \sqrt{\psi(z)}$ and $\sqrt{\psi(z)}$ is the best dominant. Further, since $\Re\phi(z)>0$, using \cite[Theorem~ 4.2a, p.~202]{subbook}, we see that $\psi$ is a Carath\'{e}odory function. Thus we have
	\begin{equation*}
	\frac{zf'(z)}{f(z)} \prec {\sqrt{\psi(z)}}=1+\frac{B_1}{4}z+\left(\frac{B_2}{6}-\frac{B^2_1}{32} \right)z^2+\cdots,
	\end{equation*}
	where $\psi(z)=\frac{1}{z}\int_{0}^{z}\phi(t)dt$. Now if we let $q(z)=1+z\psi''(z)/\psi'(z)$. Then
	\begin{equation*}
	q(0)=1\quad  \text{and} \quad \psi'(z)(q(z)+1)=\phi'(z),
	\end{equation*}
	which further by logarithmic differentiation gives
	\begin{equation}\label{psiconvexlog}
	\Psi(r,s;z):=q(z)+\frac{zq'(z)}{q(z)+1} = 1+ \frac{z\phi''(z)}{\phi'(z)}.
	\end{equation} 
	Since the function $\Psi$ satisfy the admissible conditions \cite[2.3-11, p.~35]{subbook}, using \cite[Theorem~ 2.3i (i), p.~35]{subbook} in \eqref{psiconvexlog}, we see  that $\psi$ is convex. 
	Let 
	$$h(z)=(\psi(z)-1)/\psi'(0)\in \mathcal{A}.$$
	Now it is easy to conclude that $h\in \mathcal{S}^* $, hence univalent. Finally, note that 
	$$|\arg{\sqrt{\psi(z)}}|=\frac{1}{2}|\arg{\psi(z)}| \leq \frac{\pi}{4},$$
	which implies $\Re\sqrt{\psi(z)}>0$. Thus we see that $\sqrt{\psi(z)}$ is a  Carath\'{e}odory univalent convex function. Now applying Theorem~\ref{logbohrstarlike}, the result follows.   \qed
\end{proof}		

\begin{corollary}
	Suppose $f\in \mathcal{A}$ satisfies the differential subordination
	\begin{equation*}
	\frac{zf'(z)}{f(z)}\left(\frac{zf'(z)}{f(z)}+2z\left(\frac{zf'(z)}{f(z)}\right)'\right) \prec \frac{1+(2\alpha -1)z}{1+z},\; \alpha\in [0,1).
	\end{equation*}
	Then the logarithmic coefficients of $f$ satisfies \eqref{logBohrineq} for $|z|=r \leq 1-\exp\left(\frac{2}{\alpha-1} \right).$
\end{corollary}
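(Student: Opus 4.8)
The plan is to read off the right-hand side of the subordination as a Janowski function and feed it into the theorem immediately preceding this corollary. First I would write $\phi(z)=\frac{1+(2\alpha-1)z}{1+z}$ in the normalized Janowski form $\frac{1+Dz}{1+Ez}$ with $D=2\alpha-1$ and $E=1$; since $\alpha\in[0,1)$ this gives $D\in[-1,1)$ and $E=1$, so in particular $E>D$. That theorem requires $\phi$ to be convex in $\mathbb{D}$ with $\Re\phi(z)>0$, so I would verify this directly: $\phi$ is a M\"obius transformation whose only pole is at $z=-1$, where the numerator equals $2(1-\alpha)\neq0$, hence $\phi$ maps the unit circle onto a straight line and $\mathbb{D}$ onto one of the two half-planes it bounds. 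Evaluating $\phi$ at a couple of boundary points, say $\phi(1)=\alpha$ and $\phi(i)=\alpha+(\alpha-1)i$, shows that this line is $\{\,w:\Re w=\alpha\,\}$, and since $\phi(0)=1$ has $\Re\phi(0)=1>\alpha$, we get $\phi(\mathbb{D})=\{\,w:\Re w>\alpha\,\}$. This is a convex half-plane with $\Re\phi(z)>\alpha\geq0$, so the hypotheses of the preceding theorem are met.

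Next I would compute the relevant Taylor coefficient. Expanding $\phi(z)=\bigl(1+(2\alpha-1)z\bigr)\sum_{n\geq0}(-1)^nz^n$ gives $B_1=(2\alpha-1)-1=2(\alpha-1)$, hence $|B_1|=2(1-\alpha)\neq0$ for every $\alpha\in[0,1)$. Inserting this into the radius furnished by the preceding theorem yields that the logarithmic coefficients of $f$ satisfy \eqref{logBohrineq} for $|z|=r\leq 1-\frac{1}{\exp(4/|B_1|)}=1-\frac{1}{\exp\bigl(2/(1-\alpha)\bigr)}=1-\exp\bigl(2/(\alpha-1)\bigr)$, which is precisely the asserted bound.

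I do not anticipate a genuine obstacle here: identifying $\phi$, checking its convexity through the half-plane description, computing $B_1$, and simplifying the exponent are all routine. The only step that deserves an explicit line is the verification that $\phi$ is convex (and not merely a Carath\'eodory function), which the half-plane computation settles at once. Sharpness is inherited directly from the preceding theorem, attained by the function $f$ determined by $zf'(z)/f(z)=\sqrt{\psi(z)}$ with $\psi(z)=\frac{1}{z}\int_0^z\phi(t)\,dt$.
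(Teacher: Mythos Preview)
Your proposal is correct and is precisely the intended derivation: the paper states this corollary without proof as an immediate application of the preceding theorem with $\phi(z)=(1+(2\alpha-1)z)/(1+z)$, and you have merely made explicit the convexity check and the computation $|B_1|=2(1-\alpha)$ that the paper leaves to the reader.
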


\begin{corollary}
	Suppose $f\in \mathcal{A}$ satisfies the differential subordination
	\begin{equation*}
	\frac{zf'(z)}{f(z)}\left(\frac{zf'(z)}{f(z)}+2z\left(\frac{zf'(z)}{f(z)}\right)'\right) \prec 1+\alpha z, \quad \alpha\in (0,1].
	\end{equation*}
	Then the logarithmic coefficients of $f$ satisfies \eqref{logBohrineq} for $|z|=r \leq 1-1/\exp(4/\alpha).$
\end{corollary}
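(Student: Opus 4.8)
The plan is to deduce this corollary directly from the preceding theorem --- the one treating functions $f\in\mathcal{A}$ that satisfy the subordination \eqref{p2p} --- by specializing it to the convex function $\phi(z)=1+\alpha z$. The whole task therefore reduces to checking that this particular $\phi$ satisfies the hypotheses of that theorem and then reading off the resulting Bohr radius.

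First I would verify the two standing assumptions on $\phi$. Convexity is immediate: $\phi(z)=1+\alpha z$ is an affine, hence univalent, map carrying $\mathbb{D}$ onto the open disk of radius $\alpha$ centred at $1$, and a disk is a convex domain. For the positivity of the real part, since $\Re z>-1$ on $\mathbb{D}$ we have $\Re\phi(z)=1+\alpha\Re z>1-\alpha$, which is strictly positive when $\alpha\in(0,1)$, and for $\alpha=1$ the strict inequality $\Re z>-1$ on the open disk still gives $\Re\phi(z)>0$. The endpoint $\alpha=1$, at which the image disk is internally tangent to the imaginary axis, is the one place that needs this extra remark. Next I would record the Taylor expansion $\phi(z)=1+\alpha z$, so that $B_1=\alpha\neq 0$ (and $B_m=0$ for $m\geq 2$); applying the preceding theorem then yields \eqref{logBohrineq} for $|z|=r\leq 1-1/\exp(4/|B_1|)=1-1/\exp(4/\alpha)$, which is precisely the assertion.

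For the sharpness statement I would note that the dominant produced in the proof of that theorem is $\psi(z)=\frac{1}{z}\int_0^z\phi(t)\,dt=1+\frac{\alpha}{2}z$, so the extremal function is the solution $f$ of $zf'(z)/f(z)=\sqrt{\psi(z)}=\sqrt{1+\frac{\alpha}{2}z}$, i.e. exactly the function extremal for the logarithmic coefficient bounds underlying Theorem~\ref{logbohrstarlike}. There is essentially no obstacle here: each step is a substitution into a result already established, and the only point that calls for a sentence of care is the boundary case $\alpha=1$ in the verification that $\Re\phi>0$ on $\mathbb{D}$.
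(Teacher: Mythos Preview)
Your proposal is correct and takes exactly the intended approach: the corollary is an immediate specialization of the preceding theorem to $\phi(z)=1+\alpha z$, and your verification of convexity, $\Re\phi>0$, and $B_1=\alpha$ is all that is needed. Note only that the corollary as stated does not actually assert sharpness, so your final paragraph on the extremal function, while correct, goes beyond what must be proved.
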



%
\section*{Conflict of interest}

The authors declare that they have no conflict of interest.



\end{document}